\newtheorem{theorem}{Theorem}
\newtheorem{lemma}{Lemma}
\newtheorem{proposition}{Proposition}
\newtheorem{corollary}{Corollary}
\renewcommand{\phi}{\varphi}
\renewcommand{\subsection}{\@startsection {subsection}{1}{\z@}%
             {-3.5ex \@plus -1ex \@minus -.2ex}%
             {2.3ex \@plus.2ex}%
             {\normalfont\normalsize\bfseries}}
\title[Sommerfeld Radiation for long-range potential]{Sommerfeld Radiation Condition for Helmholtz equations with long-range potentials }
\author{Eric Ströher}  
\begin{document}
\maketitle

\begin{abstract}
We study the electric Helmholtz equation $\Delta u + Vu + \lambda u =f$ and show that, for certain potentials, the solution $u$ given by the limited absorption principle obeys a Sommerfeld radiation condition. We use a non-spherical approach based on the solution $K$ of the eikonal equation $|\nabla K|^2=1 + \frac{p}{\lambda}$ to improve previous results in that area and extend them to long-range potentials which decay like $|x|^{-2-\alpha}$ at infinity, with $\alpha > 0$. 
\end{abstract}

\section{Introduction} 
Consider the electric Helmholtz equation 
\begin{equation}
\label{Helmholtz}
\Delta u + Vu + \lambda u =f.
\end{equation} 
In many fields of physics, such as the theory of diffusion, wave equations, or in quantum mechanics, problems can be reduced to finding the solution of such a Helmholtz equation. 
However, even if the existence can be proven, this solution does not need to be unique. For example, consider the equation describing the radiation of a point source, 
\begin{equation}
\label{example}
\Delta u + k^2 u =\delta (x).
\end{equation}
It has infinitely many solutions, for example 
\begin{align*}
u_{\pm}(x)=\frac{e^{\pm i k |x|}}{4 \pi |x|},
\end{align*}
as well as any linear combinations of these functions. The solution $u_{+}$ represents a wave originating from the point source, while the solution $u_{-}$ describes a wave coming from infinity. From a physical point of view, the second solution is meaningless, and should be rejected. In 1912, Sommerfeld \cite{sommerfeld_greensche_1912} proposed a condition eliminating all solutions except the outgoing waves. He showed that, if the solution of the Helmholtz equation  also satisfies the Sommerfeld radiation condition   
\begin{equation}
\label{Sommerfeld_Cond}
\lim_{|x| \rightarrow \infty} |x|^{\frac{d-1}{2}} \left( \frac{\partial}{\partial |x|} - ik \right) u(x)=0
\end{equation} uniformly in all directions, then that solution is unique. 

A few decades later, Rellich \cite{rellich_uber_1943} and Atkinson \cite{atkinson_lxi_1949} showed independently that the proof of uniqueness does not require the quantity $|x|u$ to be finite as $|x|$ tends to infinity, a condition required in Sommerfeld's proof. Several authors have extended the result to problems with mixed boundary conditions, see for example Wilcox \cite{wilcox_generalization_1956} or Levine \cite{levine_uniqueness_1964}. In more recent years, the Sommerfeld condition was used in a number of different contexts, from fluid-structure interactions (Xing \cite{xing_investigation_2008}) to Dirac operators (Kravchenko \cite{kravchenko_analogue_2000}). For a more complete introduction to the Sommerfeld radiation condition and its impact on mathematics, we recommend a note by Schot \cite{schot_eighty_1992}.

Instead of using \eqref{Sommerfeld_Cond} as a condition, it can also be viewed as a property of certain solutions. If we consider $V$ a potential such that $V\leq C|x|^{-2-\alpha}$ at infinity, with $\alpha > 1/6$, the solution $u$ of 
\begin{align}
\label{res_sol}
u=R(\lambda+i0)f:=\lim_{\epsilon \rightarrow \infty}R(\lambda+i\epsilon)f,
\end{align} 
known as the \textit{limited absorption principle}, satisfies a Sommerfeld radiation condition, as was proven by Eidus \cite{eidus_principle_1963} in 1962. Here  
$R(\lambda + i\epsilon):= (\Delta +V+\lambda u + i\epsilon)^{-1}$ is the resolvent operator of the modified Helmholz equation 
\begin{equation}
\Delta u + Vu + \lambda u + i \epsilon =f.
\end{equation}

In 1972, Ikebe and Saito \cite{ikebe_limiting_1972} extended this result to potentials of the form $V=p+Q$, where $p$ is a long-range potential, while $Q$ is a short range potential.
In the works mentioned so far, the radiation conditions can be called spherical, as they depend on $|x|$. In 1978, Saito \cite{saito_schrodinger_1987} proposed a way to improve previous results by using a non-spherical argument. Instead of working on a sphere defined bx $|x|=R$, he instead used the surface $|K(x)|=R$, where $K$ is the solution of the eikonal equation 
\begin{equation}
\label{Eikonal}
|\nabla K|^2=1 + \frac{p}{\lambda}.
\end{equation}
 where $p$ is a long-range potential, and $\lambda > 0$. With this strategy, he showed that a Sommerfeld radiation condition of the type
\begin{align*}
\int_{\mathbb{R}^d}|\nabla u -i (\nabla K )u|^2\frac{dx}{(1+|x|)^{1-\delta}} < \infty
\end{align*}
holds for solutions of \eqref{res_sol}.

In order to state the theorems of this paper, we need to introduce some notation. We define the Agmon-Hörmander norms of $u$ by 
\begin{equation*}
|||u|||_{R_0}:=\sup_{R\geq R_0}\left(\frac{1}{R}\int_{|x|\leq R}|u(x)|^2\right)^{1/2}
\end{equation*}
and 
\begin{align*}
N_{R_0}(u):=\sum_{j>J}\left( 2^{j+1}\int_{C(j)}|u|^2 \right)^{1/2} + \left( R_0\int_{|x| \leq R_0} |u|^2 \right)^{1/2}
\end{align*}
where $J$ is such that $2^{J-1} < R_0 < 2^J$ and $C(j)=\lbrace x \in \mathbb{R}^d : 2^j \leq |x| \leq 2^{j+1}\rbrace$. 

More recently, Barcelo, Vega and Zubeldia \cite{barcelo_forward_2012} studied the limited absorption principle and showed the following theorem:
\begin{theorem}
\label{BVZ_Theorem}
For $d \geq 3$, let $V$ be a potential such that $|V| \leq C|x|^{-3-\alpha}$ when $|x| \geq 1$, $C, \alpha > 0$. Then, for $u$ given by \eqref{res_sol}, we have
\begin{align*}
 \sup_{R\geq 1} R\int_{|x|\geq R} |\nabla (e^{-i\lambda^{1/2}|x|}u)|^2
< C\left[ (N_1(f))^2 + \int |x|^3 |f|^2 \right]
\end{align*}
\end{theorem}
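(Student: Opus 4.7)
The plan is to work with the ``radiation field'' $v := e^{-i\lambda^{1/2}|x|}u$, for which a direct computation gives
\[|\nabla v|^2 = \bigl|(\partial_r - i\lambda^{1/2})u\bigr|^2 + \frac{1}{|x|^2}|\nabla_\omega u|^2,\]
where $\nabla_\omega$ denotes the spherical gradient. Since $\sup_{R\geq 1} R\int_{|x|\geq R} g$ is controlled (up to a geometric factor) by the weighted integral $\int |x|\,g\,dx$ on dyadic shells, the theorem will follow from an inequality of the form $\int|x|\,|\nabla v|^2 \leq C[(N_1(f))^2 + \int|x|^3|f|^2]$, which is what I would actually aim to establish via a Morawetz-type multiplier identity applied to \eqref{Helmholtz}.

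The multiplier I would use is $\psi(|x|)\,\overline{(\partial_r-i\lambda^{1/2})u}$, where $\psi$ is a bounded, smooth, non-decreasing radial weight with $\psi'(r)\sim 1$ on $\{r\geq 1\}$ (a typical choice being $\psi(r)=r/(1+r)$ suitably rescaled). After multiplying \eqref{Helmholtz}, taking twice the real part and integrating over $\mathbb{R}^d$, the Laplacian contribution produces, via integration by parts, the positive quadratic form $\int \psi'(|x|)\,|\nabla v|^2$ plus lower-order remainders of size $\psi/|x|\cdot|\nabla v|^2$; the $\lambda u$ contribution, combined with the $-i\lambda^{1/2}$ factor in the multiplier, cancels all non-sign-definite $|u|^2$ pieces and leaves only the desired Sommerfeld-type quantity on the left-hand side. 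Boundary terms at spatial infinity vanish because $u=R(\lambda+i0)f$ inherits enough decay from the limiting absorption principle to justify the integration by parts.

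The three resulting error terms are handled as follows. The source-term contribution $\mathrm{Re}\int \psi f\,\overline{(\partial_r-i\lambda^{1/2})u}$ is bounded by Cauchy--Schwarz with weights $|x|^{3/2}$ on $f$ and $|x|^{-3/2}$ on the radiation field, the gradient factor being reabsorbed into the main term at a cost of a constant and producing $\int|x|^3|f|^2$ on the right. The potential term $\mathrm{Re}\int \psi V u\,\overline{(\partial_r-i\lambda^{1/2})u}$ is controlled using $|V|\leq C|x|^{-3-\alpha}$: split as $|x|^{-3/2-\alpha/2}u$ against $|x|^{-3/2-\alpha/2}(\partial_r-i\lambda^{1/2})u$, apply Cauchy--Schwarz, reabsorb the gradient factor into the main term, and dominate $\int|x|^{-3-\alpha}|u|^2$ by $|||u|||_1^2$ through a dyadic decomposition on the annuli $C(j)$, the extra $|x|^{-\alpha}$ yielding the geometric convergence. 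Finally $|||u|||_1$ itself is controlled by $N_1(f)+\||x|^{3/2}f\|_{L^2}$ via the Agmon--Hörmander resolvent estimate supplied by the LAP, closing the bound.

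The main obstacle, to my mind, is tracking the cross-terms produced by the non-standard multiplier $\overline{(\partial_r-i\lambda^{1/2})u}$: unlike the classical Morawetz multiplier $\overline{\partial_r u}$, it couples first-order and zeroth-order pieces, which generates terms of type $\psi(|x|)|x|^{-1}|u|^2$ and $\psi(|x|)|x|^{-1}\mathrm{Im}(\bar u\,\partial_r u)$ whose signs are delicate and which must be grouped with either the main quadratic form or the $\lambda$-term cancellation before they can be controlled. The condition $\alpha>0$ enters precisely at the dyadic summation step for the potential error, and the hypothesis $|V|\lesssim|x|^{-3-\alpha}$ (stronger than what the paper's eikonal improvement will require) is in fact the sharp decay rate for which the purely spherical phase $e^{-i\lambda^{1/2}|x|}$ produces a summable remainder in this argument.
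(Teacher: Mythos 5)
There is a genuine gap at the very first reduction. You propose to deduce the theorem from the stronger inequality $\int |x|\,|\nabla v|^2 \leq C\bigl[(N_1(f))^2 + \int |x|^3|f|^2\bigr]$ with $v=e^{-i\lambda^{1/2}|x|}u$, arguing that $\sup_{R\geq 1} R\int_{|x|\geq R}|\nabla v|^2$ is dominated by this weighted integral. The domination is true, but the weighted integral is in general \emph{infinite}, even for $V=0$ and smooth compactly supported $f$: the outgoing resolvent behaves like $u\sim a(\omega)\,|x|^{-(d-1)/2}e^{i\lambda^{1/2}|x|}$, so $v\sim a(\omega)|x|^{-(d-1)/2}$ and $|\nabla v|\sim |x|^{-(d+1)/2}$, whence $\int_{|x|\geq 1}|x|\,|\nabla v|^2\,dx \sim \int_1^\infty r^{-1}\,dr$ diverges logarithmically, while $R\int_{|x|\geq R}|\nabla v|^2$ stays bounded. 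The $\sup_R$ formulation is exactly the borderline statement and cannot be traded for a single integrated weight; so the target you reduce to is false and the argument cannot close along this route. Relatedly, your actual multiplier choice ($\psi$ bounded, $\psi'\sim 1$ for $r\geq 1$) would only produce the unweighted exterior quantity $\int_{|x|\geq 1}|\nabla v|^2$ on the left, which does not imply the claimed bound either; to generate the weight you would need $\psi'\sim |x|$, which leads you back to the divergent integral above.

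The way the paper (following Barcel\'o--Vega--Zubeldia, and in the generalization with $K$ in place of $|x|$) avoids this is to use an $R$-dependent truncated multiplier rather than a single weight: $\psi'(K)=K^2$ for $K\leq R_1$ and $\psi'(K)=R_1K$ for $K\geq R_1$ (so $\psi''=2K$ inside and $\psi''=R_1$ outside), with $R_1\simeq R$ chosen so that the surface terms on $\{K=R_1\}$ are controlled by $|||u|||_1$. The factor $R$ in front of $\int_{K\geq 2R}|\nabla(e^{-i\lambda^{1/2}K}u)|^2$ then comes from the coefficient $\psi''=R_1$ on the exterior region, the estimates of the error terms (potential, $F_{ij}$, source) are carried out uniformly in $R$ via the Agmon--H\"ormander norms and the Perthame--Vega bound $\lambda|||u|||_1^2+|||\nabla u|||_1^2\lesssim (N_1(f))^2$, and only at the very end is the supremum over $R$ taken. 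Your treatment of the error terms (dyadic summation using $\alpha>0$, Cauchy--Schwarz with the $|x|^{3/2}$ weight on $f$, control of $|||u|||_1$ by the limiting absorption estimates) is in the right spirit and matches the paper's estimates, but it must be grafted onto this family of $R$-dependent multipliers; as written, the core reduction step is to a statement that fails already in the free case.
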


In their paper, the authors hinted at the fact that one might be able to relax the conditions imposed on the decay of the potential $V$ by using the same approach as Saito and working with a non-spherical argument, using the solution of the eikonal equation \eqref{Eikonal}.

\pagebreak
In this paper, we confirm that such an approach indeed works, allowing us to consider the following class of potentials: let $V=p+Q$ be an electric potential, with $p\in C^{\infty}$, such that for some $\delta >0$, 
\begin{align}
\label{pconditions}
|Q|&\leq \frac{C}{|x|^{2-\delta}} &\text{if } |x|\leq 1, d=3 \\
|Q|&\leq \frac{C}{|x|^{2}} &\text{if } |x|\leq 1, d>3 \nonumber \\
|Q|&\leq \frac{C}{|x|^{3+\delta}} &\text{if } |x|\geq 1 \nonumber \\
|\partial^{\beta} p|&\leq C|x|^{-(2+\beta+\delta)} &\text{if } |x|\geq 1 \nonumber 
\end{align}
where $\beta$ is a multi-index.

We will show the following theorem: 
\begin{theorem}
\label{Theorem_to_prove}
For $d\geq 3$, $\lambda_0>0$ big enough, let $f$ be such that $\||\cdot |^{3/2}f\|_{L^2}< \infty$, $N_1(f)< \infty$, let $V$ be a potential described by \eqref{pconditions} and $K$ the solution of \eqref{Eikonal} . Then, for any $\lambda \geq \lambda_0$, the solution $u$ of the Helmholtz equation given by \eqref{res_sol} obeys the following Sommerfeld radiation condition:
\begin{align*}
 \sup_{R\geq 1} R\int_{K\geq R} |\nabla (e^{-i\lambda^{1/2}K}u)|^2
\leq C\left[ (N_1(f))^2 + \int  |x|^3 |f|^2 \right]
\end{align*}
\end{theorem}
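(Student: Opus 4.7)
The plan is to adapt the Morawetz-type multiplier estimate underlying Theorem \ref{BVZ_Theorem} by replacing the spherical weight $|x|$ with the eikonal profile $K$. First, I would conjugate the oscillation out of $u$ by setting $w := e^{-i\lambda^{1/2}K}u$. A direct computation using $\nabla u = e^{i\lambda^{1/2}K}(\nabla w + i\lambda^{1/2}(\nabla K)w)$ gives
\begin{equation*}
e^{-i\lambda^{1/2}K}(\Delta u + Vu + \lambda u) = \Delta w + 2i\lambda^{1/2}\nabla K\cdot\nabla w + i\lambda^{1/2}(\Delta K)w + \bigl(\lambda + p - \lambda|\nabla K|^2\bigr)w + Qw,
\end{equation*}
and the eikonal equation \eqref{Eikonal} annihilates the bracketed zeroth-order term. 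Thus $w$ satisfies
\begin{equation*}
\Delta w + 2i\lambda^{1/2}\nabla K\cdot\nabla w + i\lambda^{1/2}(\Delta K)w + Qw = e^{-i\lambda^{1/2}K}f =: g,
\end{equation*}
where the long-range potential $p$ has been absorbed into the transport term and only the short-range $Q$ survives as a zeroth-order perturbation. Since $|\nabla(e^{-i\lambda^{1/2}K}u)|^2 = |\nabla w|^2$, the target estimate is exactly a bound on $\sup_{R\geq 1}R\int_{K\geq R}|\nabla w|^2$.

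Next, I would run a Morawetz identity adapted to $K$. Mimicking the test function used in the Barcelo--Vega--Zubeldia proof but with level sets of $K$ in place of spheres, a natural multiplier is
\begin{equation*}
M_R(w) := \phi_R(K)\,\nabla K\cdot\nabla\bar w + \psi_R(K)\,\bar w,
\end{equation*}
where $\phi_R,\psi_R$ are weights essentially equal to $1$ on $\{K\geq R\}$ with a smooth transition on $\{K \leq R\}$. Taking the real part of $\int M_R(w)\cdot(\text{equation for }w)$ and integrating by parts produces, for $\lambda$ large, a quadratic form whose principal positive part is $R\int_{K\geq R}|\nabla w|^2$ plus a definite boundary term at $K = R$. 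The remaining bulk terms involve $\nabla^2 K$ (via $\nabla_j K\,\partial_j\partial_k K\,\partial_k\bar w$) and $\Delta K$; differentiating \eqref{Eikonal} yields $\nabla^2 K\cdot\nabla K = \nabla p/(2\lambda)$, which controls the radial component of the Hessian, while the conditions \eqref{pconditions} on $\partial^\beta p$ control the remainder. The forcing terms coming from $g$ and $Qw$ are handled by Cauchy--Schwarz: the piece $\int \bar w\,g$ is estimated against $N_1(f)^2 + \int|x|^3|f|^2$ after splitting dyadically in $|x|$, and $Qw$ is short-range ($|Q|\lesssim|x|^{-3-\delta}$) hence absorbable once a weighted a priori bound on $u$ is in hand.

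Two auxiliary ingredients are needed. First, one must construct $K$ with good asymptotics: for $\lambda_0$ sufficiently large, \eqref{Eikonal} is solved by perturbing $K_0=|x|$ along characteristics, producing $K$ with $K \sim |x|$ and $|\partial^\beta(K-|x|)|\lesssim |x|^{-1-\delta-|\beta|}$ for $|x|\geq 1$; the level sets $\{K = R\}$ are then small deformations of spheres. Second, one needs Agmon--Hörmander type weighted bounds $|||u|||_1 + \||x|^{-1/2-}\nabla u\|_{L^2}\lesssim N_1(f) + \||x|^{3/2}f\|_{L^2}$, which follow from the Ikebe--Saito limiting absorption results extended to the present class of potentials. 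The main obstacle is the non-radial part of $\nabla^2 K$ appearing in the Morawetz identity: only its contraction with $\nabla K$ is pinned down directly by the eikonal equation, so controlling the tangential components of $\nabla^2 K$, multiplied against $|\nabla w|^2$, requires a dedicated study of the transport equation satisfied by $\nabla^2 K$ along the characteristics, exploiting $|\partial^2 p|\lesssim |x|^{-4-\delta}$. Once these Hessian bounds are in place, the estimate closes uniformly in $R\geq 1$ by absorbing all error terms into the positive principal part, possibly after taking $\lambda_0$ larger.
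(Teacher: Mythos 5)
Your overall strategy is the one the paper follows: a Morawetz-type multiplier identity adapted to the level sets of $K$, with the eikonal equation \eqref{Eikonal} absorbing the long-range part $p$, decay of the Hessian of $K$ as the key auxiliary input, an Agmon--H\"ormander a priori bound on $u$ and $\nabla u$, and Cauchy--Schwarz on the forcing terms. The differences are cosmetic or concern the auxiliary inputs: the paper does not conjugate but derives the identity for $u$ itself (Lemma \ref{Keylemma}), the conjugated gradient appearing after completing the square via $|\nabla(e^{-i\lambda^{1/2}K}u)|^2=|\nabla u-i\lambda^{1/2}\nabla K\,u|^2$; it obtains the Hessian control by writing $K=g|x|$ and running Perthame--Vega-type maximum-point arguments on the differentiated Hamilton--Jacobi equation for $g$ (Section 2), rather than transporting $\nabla^2K$ along characteristics; and the a priori bound it uses is $\lambda|||u|||_1^2+|||\nabla u|||_1^2\leq C(1+\epsilon)N_1(f)^2$ from \cite{perthame_morreycampanato_1999}, not an Ikebe--Saito-type estimate.

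Two concrete points in your sketch would fail as written. First, the multiplier weights: with $\phi_R,\psi_R$ bounded and ``essentially equal to $1$ on $\{K\geq R\}$'', the positive principal part of the identity is only $\int_{K\geq R}|\nabla w|^2$ (plus interior Morrey--Campanato terms), and no summation over $R$ recovers the missing factor $R$. You need the Barcel\'o--Vega--Zubeldia weight transplanted to $K$, i.e.\ $\psi'(K)=K^2$ for $K\leq R$ and $\psi'(K)=RK$ for $K\geq R$, so that the exterior weight (both $\psi''$ and $\psi'/K$) is of size $R$; this is exactly the choice made in the proof of Proposition \ref{prop1}, and it is what produces $R\int_{K\geq 2R}|\nabla(e^{-i\lambda^{1/2}K}u)|^2$ on the left. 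Second, you run the identity directly on the limiting solution \eqref{res_sol}; to justify the manipulations one should prove the estimate for solutions of the regularized equation \eqref{Helm} with the $+i\epsilon u$ term, keeping the constants uniform in $\epsilon$ (the $\epsilon$-terms contribute with a favorable sign on the left), and then pass to the limit $\epsilon\to0$ as in \cite{zubeldia_limiting_2013}. A smaller caution: the claimed bound $|K-|x||\lesssim|x|^{-1-\delta}$ is not what the construction gives (an $O(1/\lambda)$ direction-dependent constant survives in general); what is needed, and what the paper proves, is $K\simeq|x|$ together with the decay $|F_{ij}|\lesssim|x|^{-2-\delta}$ and $|\nabla\sum_iF_{ii}|\lesssim|x|^{-3-\delta}$ for the Hessian-type errors, and establishing these is a substantial part of the argument rather than a routine perturbation of $K_0=|x|$.
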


Our proof will largely follow \cite{barcelo_forward_2012}, but we modify the potential as described above to include a long-range potential $p$. The non-spherical approach will then allow us to control the terms of the proof stemming from $p$, giving a similar result as Theorem \ref{BVZ_Theorem}, but with a wider class of potentials. 

Indeed, while $Q$ follows the same conditions as the potential in \cite{barcelo_forward_2012}, we were able to reduce the decay necessary on $p$. In particular, our result allows for potentials that behave like $|x|^{-3}$, an important case for physical applications of the Helmholtz equation. The improved conditions also allow for potentials approaching quadratic decay; however, for potentials behaving like $|x|^{-2}$, our result does not necessarily hold.  

In the next section, we will start by showing some properties of the eikonal solution $K$.  In section 3 we will adapt the proof of Prop. 2.5 of \cite{barcelo_forward_2012} to show a key equality in our modified setting. In section 4 we use this equality to show the Sommerfeld radiation condition with our more general potential.  \\

\noindent\textbf{Acknowledgements.} We thank L. Vega for having proposed this interesting problem. We are especially grateful to J.P.G. Ramos for his advice and guidance during the writing of this paper.

\section{Properties of the eikonal equation}

Let $K$ be the solution of the eikonal equation \eqref{Eikonal}. 
We introduce the following notation:
\begin{equation*}
\nabla u = \frac{\nabla K}{| \nabla K |} \nabla_K^r u + \nabla_K^{\perp}u,
\end{equation*} 
so that $\nabla K \cdot \nabla_K^{\perp}=0$ and $|\nabla u|^2=|\nabla_K^r u|^2 + |\nabla_K^{\perp}u|^2$. 
We will use the notation $C$ for an arbitrary strictly positive constant.

Some properties of the solution of \eqref{Eikonal} have been shown by Barles \cite{barles_ceremade_eikonal_1987}. Given the decay of $p$ presented in \eqref{pconditions}, he showed that we can write $K$ as 
\begin{equation}
\label{noderiv}
K(x)=g(x)|x|
\end{equation}
where 
\begin{equation}
\label{gone}
\lim_{\lambda\rightarrow \infty}g(x)=1
\end{equation}
uniformly in $x$ and, for $1\leq i,j\leq n$,
\begin{equation}
\label{gprime}
\lim_{\lambda \rightarrow \infty}|x|| \partial_i g|=\lim_{\lambda \rightarrow \infty} |x|^2 |\partial_{ij}g|=0
\end{equation}
In particular, if we take $\lambda$ big enough, then there exist $c_0, c_1>0$ close to $1$ such that $c_0 |x|\leq K(x) \leq c_1 |x|$. Using a result by Izosaki \cite{isozaki_eikonal_1980}, we can further show that $K\in C^{\infty}$, and therefore, away from $0$, $g\in C^{\infty}$.
 
We have the following identity (see \cite{zubeldia_limiting_2013} or \cite{saito_schrodinger_1987} for the proof):
\begin{lemma}
\label{Lemma1}
The following identity holds:
\begin{equation}
\partial^2_{ij}K=\frac{|\nabla K|^2}{K}\delta_{ij}-\frac{1}{K}\partial_iK\partial_jK+\frac{1}{K}F_{ij}
\end{equation}
where 
\begin{align}
\label{Ffunction}
F_{ij}= -\delta_{ij}(|x|^2|\nabla g|^2+2gx\cdot \nabla g)+|x|^2\partial_ig\partial_jg +2x_ig\partial_jg+2x_jg\partial_ig + g|x|^2\partial^2_{ij}g
\end{align} is a bounded function.
\end{lemma}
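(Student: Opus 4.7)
The plan is to prove the identity by direct computation using the ansatz $K(x)=g(x)|x|$ from \eqref{noderiv}, and then to verify boundedness of $F_{ij}$ via the asymptotic bounds \eqref{gone}--\eqref{gprime} on $g$. Since the claimed identity has $K$ in the denominators on the right, I would first multiply through by $K$ and instead aim to prove
\begin{equation*}
K\,\partial^2_{ij}K = |\nabla K|^2\,\delta_{ij} - \partial_iK\,\partial_jK + F_{ij},
\end{equation*}
which is free of $1/|x|$ singularities at the origin.

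The first main step is differentiation. Using $K=g|x|$ and the product/chain rules, one computes
\begin{equation*}
\partial_iK = |x|\partial_i g + g\,\frac{x_i}{|x|}, \qquad \partial^2_{ij}K = |x|\partial^2_{ij}g + \frac{x_j}{|x|}\partial_ig + \frac{x_i}{|x|}\partial_jg + g\Bigl(\frac{\delta_{ij}}{|x|} - \frac{x_ix_j}{|x|^3}\Bigr).
\end{equation*}
Multiplying the second expression by $K=g|x|$ gives
\begin{equation*}
K\,\partial^2_{ij}K = g|x|^2\partial^2_{ij}g + gx_j\partial_ig + gx_i\partial_jg + g^2\delta_{ij} - \frac{g^2x_ix_j}{|x|^2}.
\end{equation*}
Analogously, squaring $\partial_iK$ and summing yields $|\nabla K|^2=|x|^2|\nabla g|^2+2g(x\cdot\nabla g)+g^2$, while $\partial_iK\,\partial_jK=|x|^2\partial_ig\,\partial_jg + gx_j\partial_ig + gx_i\partial_jg + g^2x_ix_j/|x|^2$.

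The second main step is algebraic bookkeeping: subtracting, the $g^2\delta_{ij}$ terms on the left and in $|\nabla K|^2\delta_{ij}$ cancel, as do the two copies of $g^2x_ix_j/|x|^2$; the mixed terms $gx_i\partial_jg$ and $gx_j\partial_ig$ combine to leave a factor of $2$ on each, and the remaining $|x|^2|\nabla g|^2\delta_{ij}$, $2g(x\cdot\nabla g)\delta_{ij}$, $|x|^2\partial_ig\,\partial_jg$, and $g|x|^2\partial^2_{ij}g$ assemble precisely into the expression \eqref{Ffunction} for $F_{ij}$. Thus the identity holds away from the origin, and extends to $0$ by continuity of $K\in C^\infty$.

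Finally, boundedness of $F_{ij}$ follows by noticing that every summand in \eqref{Ffunction} is a product of three types of factors---$g$, $|x||\nabla g|$, and $|x|^2|\partial^2_{ij}g|$---each of which is uniformly bounded in $x$ for $\lambda$ large, by \eqref{gone}--\eqref{gprime}. For example, $|x|^2\partial_ig\,\partial_jg=(|x|\partial_ig)(|x|\partial_jg)$, $2x_ig\partial_jg=2g\cdot(x_i\partial_jg)$ with $|x_i\partial_jg|\le |x||\nabla g|$, and $g|x|^2\partial^2_{ij}g$ is already in the right form; the $\delta_{ij}$ summand is controlled the same way. I do not expect any serious obstacle here: the eikonal equation itself is not used in this lemma, and the whole proof is essentially a careful identification of coefficients. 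The only mild care needed is in the combinatorics of the cancellations and in the passage to the origin, both of which are straightforward.
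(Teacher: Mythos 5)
Your proof is correct and is essentially the standard verification: the paper does not prove Lemma \ref{Lemma1} itself but defers to \cite{zubeldia_limiting_2013} and \cite{saito_schrodinger_1987}, where the identity is obtained in the same way, by differentiating $K=g|x|$ from \eqref{noderiv}, multiplying by $K$, and identifying the remainder with \eqref{Ffunction}, with boundedness following from \eqref{gone}--\eqref{gprime} for $\lambda$ large exactly as you argue. The only small caveat is your final remark about extending the identity to the origin: since $g$ is only known to be smooth away from $0$, the identity should simply be understood on $\mathbb{R}^d\setminus\{0\}$, which is all that is used later.
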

From this we compute
\begin{align}
\label{Fgrad}
\nabla (\sum_{i=1}^nF_{ii})
=-(n-1)(\frac{\nabla p}{\lambda} - 2g\nabla g)+ 2xg\Delta g + |x|^2\nabla g \Delta g + |x|^2g\nabla \Delta g \\
+ 2\nabla g |x|\partial_r g + 2g \frac{x}{|x|}\partial_r g + 2g|x|\nabla \partial_r g. \nonumber
\end{align}

Due to the properties \eqref{Eikonal} and \eqref{noderiv}, we already have a good idea of how $K$ and $\partial_i K$ behave. The proof of Theorem \ref{Theorem_to_prove} will however also require a certain control over  $F_{ij}$. In this section, we prove decay of the derivatives of $g$, which will translate into the appropriate decay for $F$.

Expanding \eqref{Eikonal}, we get the following Hamilton-Jacobi equation:
\begin{equation}
\label{HJ}
g^2 + 2gx\cdot \nabla g + |x|^2 |\nabla g|^2 =1+\frac{p}{\lambda}.
\end{equation}

Perthame and Vega \cite{perthame_energy_2006} showed that, under similar conditions, we have the estimate 
\begin{equation}
\label{perthestimate}
|x|^{3+\delta}|\partial_r g|\leq \frac{C}{\lambda}
\end{equation}
by taking the value of $x$ that maximizes (resp. minimizes) the left side of \eqref{perthestimate}, and using that at this point, the gradient vanishes, to get an upper (resp. lower) bound on the expression.  

We will adapt his method to find similar bounds on $\partial_i g$.
We start by taking the derivative $\partial_i$ of \eqref{HJ}:
\begin{align}
\label{HJdi}
g\partial_i g +\partial_i g |x|\partial_r g + g \frac{x_i}{|x|} \partial_r g + gx \cdot \nabla \partial_{i} g + x_i|\nabla g|^2 + |x|^2\nabla g \cdot \nabla \partial_i g= \frac{\partial_ i p}{2\lambda}. 
\end{align} 
We consider the minimum point $x_0$ of  
\begin{align*}
\min |x|^{3+\delta} \partial_i g.
\end{align*}
Then, at $x_0$, we have
\begin{align*}
(3+\delta)|x|^{1+\delta} x \partial_i g + |x|^{3+\delta}\nabla (\partial_i g)=0 \\
\nabla (\partial_i g)=-(3+\delta)|x|^{-2} x \partial_i g
\end{align*}
Inserting this in \eqref{HJdi}, we get:
\begin{align*}
(2+\delta) (g+ \partial_r g   |x|) \partial_i g=  -\frac{\partial_ i p}{2\lambda}+\frac{x_i}{|x|^2}(g |x|\partial_r g + |x|^2|\nabla g|^2)
\end{align*}
By the properties \eqref{gone} and \eqref{gprime}, we have $(2+\delta) (g+ \partial_r g   |x|) \simeq 1$. 
If $x_i=0$, it is easy to see that $\partial_i g$ and $\partial_i p$ have the same decay. \\
Suppose first that $x_i>0$. Then
\begin{align*}
 \partial_i g\gtrsim \frac{ -\partial_ i p}{2\lambda}+ \frac{x_i}{|x|}g \partial_r g
\end{align*}
from which $\min |x|^{3+\delta} \partial_i g\geq -C/\lambda$ follows. \\
If $x_i<0$, we get
\begin{align*}
   \partial_i g\simeq -\frac{\partial_ i p}{2\lambda}-\frac{|x_i|}{|x|^2}(1+\frac{p}{\lambda}-g^2)
\end{align*}
In \cite{barles_ceremade_eikonal_1987} we find the estimate $1+ \inf \frac{p}{\lambda} \leq g^2 \leq 1+\sup \frac{p}{\lambda}$. Then
 \[
 (1+\frac{p}{\lambda}-g^2) \leq C|x|^{-2-\delta}, 
 \]
and we get $\min |x|^{3+\delta}\partial_i g\geq -C/\lambda$. 

If we consider the maximum point $x_1$ of $\max |x|^{3+\delta}\partial_i g$, we can use similar arguments as for the minimum case to get the bound $\max |x|^{3+\delta}\partial_i g \leq \frac{C}{\lambda} $. 
We have thus shown that, for all $x$, 
 \begin{equation}
 \label{bounddig}
 |x|^{3+\delta}|\partial_i g|\leq \frac{C}{\lambda}.
 \end{equation}

We can use this property to directly gain some more decay. 
From equation \eqref{HJdi} we have
\begin{align*}
    |x|  |\partial_r \partial_{i} g|   \lesssim \frac{|\partial_ i p|}{2\lambda}+g|\partial_i g|+|\partial_i g| |x||\partial_r g|+g |\partial_r g|+|x||\nabla g|^2+|x|^2|\nabla g|  |\nabla \partial_i g|.
\end{align*}
Using our estimates on $p$, as well as \eqref{gone}, \eqref{gprime} and \eqref{bounddig} we then conclude that 
\begin{equation}
|\partial_r\partial_i g|\leq \frac{C}{|x|^{4+\delta}}.
\end{equation}

We now want to bound $\partial^2_{ij}g$; to simplify the presentation of the proof, we assume that $\partial^2_{ij}g=0$ when $i \neq j$, the proof in the general case is similar. \\
We take the derivative $\partial^2_{ii}$ of \eqref{HJ}.
This gives us, for all $i\leq n$,
\begin{align*}
&(\partial_i g)^2 + g\partial_{ii}^2 g+\partial_{ii}^2 g |x|\partial_r g+ 2 \partial_i g \frac{x_i}{|x|}\partial_r g+2 \partial_i g |x|\partial_i\partial_r g \\
 +&g (\frac{1}{|x|}-\frac{x_i^2}{|x|^3}) \partial_r g+2g \frac{x_i}{|x|} \partial_i \partial_r g + g|x|\partial_r \partial^2_{ii} g 
+ x_i\nabla g \cdot \nabla (\partial_i g)+|\nabla g|^2 \\
+ &2x_i\nabla g \cdot \nabla \partial_i g+|x|^2\nabla \partial_i g \cdot \nabla \partial_i g+|x|^2\nabla g \cdot \nabla \partial^2_{ii} g= \frac{\partial^2_{ii} p}{2\lambda}.
\end{align*}
Let $x_0^{i}$ be the maximum  of $|x|^{4+\delta}|\partial^2_{ii}g|$. Then, at $x_0^{i}$, 
\begin{align*}
\nabla \partial^2_{ii}g=-(4+\delta)\frac{x}{|x|^2}\partial^2_{ii}g.
\end{align*}
We take the index $i_0$ which maximises the above maxima,  
\begin{equation}
\label{max_cond_equal}|x_0^{i_0}|^{4+\delta}|\partial_{i_0i_0}^2g(x_0^{i_0})|\geq |x_0^{i}|^{4+\delta}|\partial_{ii}^2g(x_0^{i})|,
\end{equation} for all $1\leq i \leq n$. From now on, we drop the indices of $x$ and $i$ to avoid confusion. \\
Then
\begin{align*}
&(3+\delta) (g+|x|\partial_r g ) |x|^{4+\delta}|\partial^2_{ii}g| \\
\leq &\frac{|x|^{4+\delta}|\partial^2_{ii} p|}{2\lambda}+|x|^{4+\delta}|\partial_i g|^2+2|x|^{4+\delta}|\partial_i g| |\partial_r g| \\
+&2|\partial_i g| |x|^{5+\delta}|\partial_i\partial_r g| +2g |x|^{3+\delta}|\partial_r g|+2g|x|^{4+\delta}  |\partial_i \partial_r g| \\
+&|x|^{4+\delta}|\nabla g|^2+3|x|^{5+\delta}|\nabla g|  |\nabla \partial_i g|+|x|^{6+\delta}|\nabla \partial_i g|^2.
\end{align*}
By our choice of index $i$, 
\begin{align*}
|x|^{4+\delta}|\nabla \partial_i g|\leq C|x|^{4+\delta} |\partial^2_{ii}g|,
\end{align*}
and we can conclude that
\begin{equation}
|\partial_{ii}^2 g |\leq \frac{C}{|x|^{4+\delta}}.
\end{equation}
  If we do not impose $\partial^2_{ij}g=0$ when $i \neq j$, we get through a similar (albeit slightly longer) proof the result
\begin{equation*}
|\partial_{ij}^2 g |\leq \frac{C}{|x|^{4+\delta}}.
\end{equation*}

 To finish our estimates on the terms of $F$, we still need to bound $\nabla \Delta g$. 
We derive the Hamilton-Jacobi equation \eqref{HJ} by $ \partial_j \Delta$ and apply the same methodology as for the previous steps to get the estimate
\begin{equation*}
|\partial_j \Delta g|\leq \frac{C}{\lambda}|x|^{-5-\delta}.
\end{equation*}

After all this work, we can finally use all these estimates on \eqref{Ffunction} and \eqref{Fgrad} to get 
\begin{align*}
|F_{ij}| \leq \frac{C}{|x|^{2+\delta}}
\end{align*}
and 
\begin{align*}
|\nabla (\sum_{i=1}^nF_{ii})|\leq& \frac{C}{|x|^{3+\delta}}.
\end{align*}

\section{Key Results}
Consider the modified Helmholtz equation 
\begin{equation}
\label{Helm}
\Delta u + p(x)u + Q(x)u + \lambda u + i \epsilon u=f.
\end{equation}
Let $K$ be the solution of the eikonal equation 
\begin{equation}
|\nabla K|^2=1 + \frac{p}{\lambda}.
\end{equation}
We assume that, along our assumptions on $p$ and $Q$ stated in the introduction, the operator $\Delta + p + Q$ is self-adjoint in $L^2(\mathbb{R}^d)$ with domain $H^1(\mathbb{R}^d)$. 

In \cite{barcelo_forward_2012}, the authors prove Theorem \ref{BVZ_Theorem} by working on the sphere of radius $|x|=R$. The strategy of this paper is to follow the idea of Saito \cite{saito_schrodinger_1987} and work on the surface defined by $K(x)=R$, with normal vector $\frac{\nabla K}{|\nabla K|}$. Note that by taking $p=0$, we have $K=|x|$ and obtain the results of \cite{barcelo_forward_2012}.

The following lemma can easily be proven:
\begin{lemma}
The solution $u$ of the Helmholtz equation (\ref{Helm}) satisfies
\begin{align}
\label{Lemma2a}
\int \phi \lambda(1+\frac{p}{\lambda}) |u|^2 - \int \phi |\nabla u|^2 + \int \phi Q |u|^2 -\mathcal{R}\int \nabla \phi \cdot \nabla u \overline{u}
=\mathcal{R}\int \phi f \overline{u} \\
\label{Lemma2b}
\epsilon \int \phi |u|^2-\Im \int\nabla \phi \cdot \nabla u \overline{u}=\Im \int \phi f \overline{u}.
\end{align} 
\end{lemma}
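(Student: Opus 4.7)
The plan is to multiply the Helmholtz equation \eqref{Helm} by $\phi\overline{u}$, integrate over $\mathbb{R}^d$, use integration by parts on the Laplacian term, and then split the resulting complex identity into its real and imaginary parts. This is a standard energy-type computation, and both assertions will fall out simultaneously.

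Concretely, I would start from
\begin{equation*}
\int \phi\,\Delta u\,\overline{u} + \int \phi p|u|^2 + \int \phi Q|u|^2 + \lambda\int \phi|u|^2 + i\epsilon \int \phi|u|^2 = \int \phi f \overline{u}.
\end{equation*}
For the first term, I would integrate by parts to obtain
\begin{equation*}
\int \phi\,\Delta u\,\overline{u} = -\int \phi|\nabla u|^2 - \int \nabla\phi\cdot\nabla u\,\overline{u},
\end{equation*}
assuming (as is implicit) that $\phi$ is chosen so that the boundary terms vanish; since we will eventually apply this with compactly supported or appropriately decaying $\phi$, and since $u\in H^1$ by self-adjointness of $\Delta + p + Q$ on $H^1(\mathbb{R}^d)$ together with the fact that $u = R(\lambda+i\epsilon)f$ lies in the domain of the operator, this is justified. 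Substituting back gives a single complex identity.

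Taking the real part, the $i\epsilon$-term drops out and, after regrouping $\lambda\int\phi|u|^2 + \int\phi p|u|^2 = \int\phi\lambda(1+p/\lambda)|u|^2$, yields exactly \eqref{Lemma2a}. Taking the imaginary part, all the real-valued terms (those involving $\lambda$, $p$, $Q$, $|\nabla u|^2$) vanish, leaving $\epsilon\int\phi|u|^2 - \Im\int\nabla\phi\cdot\nabla u\,\overline{u} = \Im\int\phi f \overline{u}$, which is \eqref{Lemma2b}.

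The only genuine obstacle is justifying the integration by parts without boundary terms at infinity; this is however routine given the assumed regularity ($u \in H^1$, $f$ with sufficient decay) and, in the applications later in the paper, the multiplier $\phi$ will either be compactly supported or decay fast enough to kill the boundary contribution. No further technical ingredient beyond standard Green's identity is required.
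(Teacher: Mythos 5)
Your proposal is correct and is exactly the standard multiplier argument the paper has in mind (it states the lemma "can easily be proven" and omits the proof): multiply \eqref{Helm} by $\phi\overline{u}$, integrate by parts in the Laplacian term, and take real and imaginary parts, with \eqref{Lemma2a} and \eqref{Lemma2b} falling out respectively. The remark on justifying the absence of boundary terms via $u\in H^1$ and the choice of $\phi$ is appropriate and consistent with how $\phi$ is later used.
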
 
We also have the following:
\begin{lemma}
Let $\psi$ be a $K$-radial function and regular. Then any solution $u$ of (\ref{Helm}) satisfies 
\begin{align}
&\Re \int  \nabla u \cdot (D^2\psi \cdot \nabla \overline{u} ) +\frac{1}{2}\Re\int \nabla u \cdot (\nabla (\Delta \psi) \overline{u}) \nonumber \\
\label{Lemma3}
+&  \frac{1}{2}\Re\int \nabla \psi \cdot (\nabla p|u|^2) + \epsilon \Im \int  \nabla \psi \cdot \nabla \overline{u}u \\
-& \Re\int Q(x)u \nabla \psi \cdot \nabla \overline{u}u -\frac{1}{2}\Re\int Q(x)\Delta \psi |u|^2 \nonumber \\
=-&\Re\int f \nabla \psi \cdot \nabla \overline{u} -\frac{1}{2}\Re\int f\Delta \psi \overline{u} \nonumber
\end{align}
\end{lemma}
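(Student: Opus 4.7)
The plan is a Morawetz/Rellich-type multiplier argument. I would multiply both sides of (\ref{Helm}) by
\begin{equation*}
M\overline{u}:=\nabla\psi\cdot\nabla\overline{u}+\frac{1}{2}\Delta\psi\,\overline{u},
\end{equation*}
integrate over $\mathbb{R}^d$, take real parts, and rearrange. Each summand on the left of (\ref{Helm}) will contribute exactly one (or, for $\lambda u$, none) of the terms appearing in (\ref{Lemma3}).

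For the principal piece, two integrations by parts yield
\begin{equation*}
\Re\int(\Delta u)\,M\overline{u}=-\Re\int \nabla u\cdot(D^2\psi\cdot\nabla\overline{u})-\frac{1}{2}\Re\int \nabla u\cdot\nabla(\Delta\psi)\,\overline{u},
\end{equation*}
the key cancellation being that the two contributions of the form $\frac{1}{2}\Delta\psi|\nabla u|^2$ produced by the two pieces of $M$ have opposite sign. For the $pu$ piece, using $\Re(u\nabla\overline{u})=\frac{1}{2}\nabla|u|^2$ and one integration by parts gives
\begin{equation*}
\Re\int pu\,M\overline{u}=-\frac{1}{2}\int\nabla p\cdot\nabla\psi\,|u|^2,
\end{equation*}
again thanks to the exact cancellation of the two $\frac{1}{2}p\Delta\psi|u|^2$ contributions. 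The $\lambda u$ piece vanishes by the same mechanism, while the $i\epsilon u$ piece produces, via $\Re(iz)=-\Im(z)$, $\Im\int|u|^2\Delta\psi=0$, and one conjugation, the term $\epsilon\Im\int\nabla\psi\cdot\nabla\overline{u}\,u$. Finally, the $Qu$ piece is left as is and yields the two $Q$-terms of (\ref{Lemma3}) directly. Collecting everything and transferring the $\Delta u$ contribution to the left-hand side produces the identity with $-\Re\int f M\overline{u}$ on the right.

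The only delicate point is the justification of the integrations by parts: $u\in H^1(\mathbb{R}^d)$ is merely a distributional solution of (\ref{Helm}), and in the applications to come $\psi$ will grow polynomially in $|x|$ through its dependence on $K$. One therefore needs $\psi,\nabla\psi,D^2\psi$, and $\nabla\Delta\psi$ to decay or grow sufficiently slowly at infinity so that all boundary terms vanish; this is consistent with choosing $\psi$ to be $K$-radial and combining the bounds on $p$ and $Q$ from (\ref{pconditions}) with the estimates on $K$ and its derivatives from Section~2. A standard truncation argument---replacing $\psi$ by $\chi(|x|/R)\psi$, running the computation above, and passing to the limit $R\to\infty$---makes the identity rigorous. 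I expect this truncation and the bookkeeping of the vanishing boundary contributions, rather than the algebraic manipulations, to be the main technical (though routine) obstacle.
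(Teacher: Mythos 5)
Your proposal is correct and is exactly the paper's approach: the paper's own proof is just the sketch "multiply \eqref{Helm} by $\nabla\psi\cdot\nabla\overline{u}+\frac{1}{2}\Delta\psi\,\overline{u}$, integrate, take real parts, integrate by parts," with details referred to Perthame--Vega \cite{perthame_morreycampanato_1999}, and your sign bookkeeping (the cancellation of the $\frac{1}{2}\Delta\psi|\nabla u|^2$ and $\frac{1}{2}p\Delta\psi|u|^2$ terms, the vanishing of the $\lambda$ term, and the $\epsilon\Im$ term) reproduces \eqref{Lemma3} correctly. Your remark on justifying the integrations by parts via truncation is a reasonable way to supply the rigor the paper delegates to the reference.
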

The idea of the proof is to multiply \eqref{Helm} by $\nabla \psi \cdot \nabla \overline{u} +\frac{1}{2}\Delta \psi \overline{u}$, integrate and take the real part. Then, after integrating by parts, we get the result. For more details see \cite{perthame_morreycampanato_1999}.

Combining the two lemmata in a precise way, we get the following key proposition:

\begin{lemma}
\label{Keylemma}
If $\psi$ is $K$-radial, then any solution $u$ of the Helmholtz equation (\ref{Helm}) satisfies
\begin{align*}
&\frac{1}{2}\int |\nabla K|^2 \psi''\left|\nabla_K^r u-i\lambda^{1/2}|\nabla K| u\right|^2
+\int \left(\frac{\psi'}{K}-\frac{\psi''}{2}\right)|\nabla K|^2|\nabla_K^{\perp} u|^2\\
&+ \Re \frac{d-1}{2}\int \nabla \left( \frac{\psi'}{K}|\nabla K|^2\right) \cdot\nabla u \overline{u}\\
&-\frac{d-1}{2}\int  \frac{\psi'}{K}|\nabla K|^2  Q |u|^2 -\Re \int Q\nabla \psi \cdot \nabla \overline{u} u\\
&-\Im \lambda^{1/2} \int\nabla (|\nabla K|^2)\psi'\cdot \nabla u \overline{u}+  \frac{1}{2}\int \nabla \psi \cdot \nabla p|u|^2\\
&+\int\frac{\psi'}{K}\left(\nabla u \cdot F \cdot \nabla_K^{\perp}\overline{u}+ \nabla u \cdot F \cdot \frac{\nabla K}{|\nabla K|}\nabla_K^ru\right)\\
&+ \Re \frac{1}{2}\int \nabla \left( \frac{\psi'}{K}\sum F_{ii}\right) \cdot\nabla u \overline{u}
-\frac{1}{2}\int \left( \frac{\psi'}{K}\sum F_{ii} \right) Q |u|^2\\
&+\frac{\epsilon}{2\lambda^{1/2}}\int \psi' \left| \nabla u-i\lambda^{1/2} \nabla K u\right|^2\\
&-\frac{\epsilon}{2\lambda^{1/2}}\int \psi' Q |u|^2 +\frac{\epsilon}{2\lambda^{1/2}} \Re  \int \nabla (\psi') \cdot \nabla u \overline{u}\\
=&-\Re \int  f \nabla \psi \cdot \overline{\nabla u}-\frac{1}{2} \Re \int f \left[ \psi'\left(\frac{d-1}{K}|\nabla K|^2 +\frac{1}{K}\sum F_{ii}\right)\right]\overline{u}\\
&+\Im \lambda^{1/2} \int |\nabla K|^2 \psi' f \overline{u}-\frac{\epsilon}{2\lambda^{1/2}}\Re\int  \psi' f \overline{u}. 
\end{align*}
\end{lemma}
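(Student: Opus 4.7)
The plan is to apply Lemma 3 with the $K$-radial weight $\psi$ and then correct the resulting identity by adding carefully chosen multiples of the two identities of Lemma 2, so as to promote the $\psi''$-weighted radial Hessian term into the full Sommerfeld square $|\nabla_K^r u - i\lambda^{1/2}|\nabla K|u|^2$ and to assemble the $\epsilon$-square that tracks the imaginary part of the equation.

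First, I would substitute $\psi$ $K$-radial into Lemma 3. Writing $\nabla\psi = \psi'(K)\nabla K$, $D^2\psi = \psi''\,\nabla K\otimes\nabla K + \psi' D^2K$, and using Lemma 1 to expand $\partial_{ij}^2 K$, one gets $\Delta\psi = \psi''|\nabla K|^2 + \psi'\frac{d-1}{K}|\nabla K|^2 + \frac{\psi'}{K}\sum F_{ii}$. The Hessian term, once $\nabla u$ is decomposed as $(\nabla K/|\nabla K|)\nabla_K^r u + \nabla_K^\perp u$, splits as $\Re\int\nabla u\cdot D^2\psi\cdot\nabla\bar u = \int\psi''|\nabla K|^2|\nabla_K^r u|^2 + \int\psi'\frac{|\nabla K|^2}{K}|\nabla_K^\perp u|^2 + \Re\int\frac{\psi'}{K}\nabla u\cdot F\cdot\nabla\bar u$, which already contains the $F$-pieces and the initial perpendicular and radial coefficients of the target; the $\Delta\psi$ and $\nabla\psi$ expansions simultaneously produce the $(d-1)/K$-weighted gradient and $Q|u|^2$ terms as well as the $\nabla\psi\cdot\nabla p\,|u|^2$ integral.

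Next, I would add $\tfrac{1}{2}$ of \eqref{Lemma2a} with weight $\phi = \psi''|\nabla K|^2$. The $\lambda(1+p/\lambda)|u|^2 = \lambda|\nabla K|^2|u|^2$ contribution gives the $\tfrac{\lambda}{2}\int\psi''|\nabla K|^4|u|^2$ piece of the square, while the $-\int\phi|\nabla u|^2$ piece, split again by radial/perpendicular, halves the $|\nabla_K^r u|^2$ coefficient and pushes the perpendicular coefficient down to $(\psi'/K-\psi''/2)|\nabla K|^2$. To produce the missing cross term $-\lambda^{1/2}\int\psi''|\nabla K|^2\Im((\nabla K\cdot\nabla u)\bar u)$ of the square, I would use the identity $\psi''|\nabla K|^2\nabla K = \nabla(\psi'|\nabla K|^2) - \psi'\nabla(|\nabla K|^2)$ together with \eqref{Lemma2b} applied to $\phi = \psi'|\nabla K|^2$; this leaves the $-\Im\lambda^{1/2}\int\nabla(|\nabla K|^2)\psi'\cdot\nabla u\bar u$ term on the left and moves the $\Im\int\psi'|\nabla K|^2 f\bar u$ piece to the right. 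An entirely parallel use of \eqref{Lemma2a} and \eqref{Lemma2b} with weight proportional to $\psi'/\lambda^{1/2}$, combined with the $\epsilon\Im\int\nabla\psi\cdot\nabla\bar u\,u$ term already present in Lemma 3, assembles the $\epsilon$-square $\tfrac{\epsilon}{2\lambda^{1/2}}\int\psi'|\nabla u - i\lambda^{1/2}\nabla K\,u|^2$ together with the listed $Q|u|^2$ and $\Re\int\nabla(\psi')\cdot\nabla u\bar u$ corrections.

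The main obstacle is not any single computation but the bookkeeping: every $\Re\int\nabla\phi\cdot\nabla u\bar u$, $Q|u|^2$, $f\bar u$ and $f\,\nabla\bar u$ term generated by the two Lemma 2 substitutions must be verified to combine with or cancel against the analogous term from Lemma 3 so as to reproduce exactly the coefficients in the statement. I would carry this out by reading off the target right-hand side piece by piece and checking that the four weight choices above account for everything.
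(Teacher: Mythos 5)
Your proposal is correct and follows essentially the same route as the paper: the paper also computes (\ref{Lemma3}) $+$ (\ref{Lemma2a}) with $\phi=\tfrac12\psi''|\nabla K|^2$ $+\ \lambda^{1/2}$(\ref{Lemma2b}) with $\phi=\psi'|\nabla K|^2$, expands $D^2\psi$ and $\Delta\psi$ via Lemma \ref{Lemma1}, splits $\nabla(|\nabla K|^2\psi')=\psi'\nabla(|\nabla K|^2)+\psi''|\nabla K|^2\nabla K$ to complete the first square, and then subtracts (\ref{Lemma2a}) with $\phi=\tfrac{\epsilon}{2\lambda^{1/2}}\psi'$ to assemble the $\epsilon$-square together with the $\epsilon\Im\int\nabla\psi\cdot\nabla\overline{u}\,u$ term from Lemma 3. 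Your only deviation is a harmless description of the $\epsilon$-step as a ``parallel use of (\ref{Lemma2a}) and (\ref{Lemma2b})'', whereas the needed $\epsilon|u|^2$ contribution already comes from the earlier $\lambda^{1/2}$(\ref{Lemma2b}) addition; the weights and final bookkeeping are the same.
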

\begin{proof}

We calculate \eqref{Lemma3} + \eqref{Lemma2a} + $\lambda^{1/2}$ \eqref{Lemma2b}, where we replace $\phi$ by $\frac{1}{2}|\nabla K|^2\psi''$ in \eqref{Lemma2a} and by $|\nabla K|^2 \psi'$ in \eqref{Lemma2b}
\begin{align}
&\Re \int \nabla u \cdot D^2 \psi \cdot \overline{\nabla u}+ \Re \frac{1}{2}\int \nabla (\Delta \psi) \cdot\nabla u \overline{u} + \epsilon \Im \int \nabla \psi \cdot \nabla \overline{u} u \nonumber \\
-&\frac{1}{2}\int \Delta \psi Q |u|^2 -\Re \int Q\nabla \psi \cdot \nabla \overline{u}u +  \frac{1}{2}\int \nabla \psi \cdot \nabla p|u|^2 \nonumber \\
+&\int \frac{1}{2}|\nabla K|^2\psi'' \lambda(1+\frac{p}{\lambda}) |u|^2 - \int \frac{1}{2}|\nabla K|^2\psi'' |\nabla u|^2 \nonumber \\ 
\label{sum_lemmata}
+& \int \frac{1}{2}|\nabla K|^2\psi'' Q |u|^2 -\Re\int \nabla (\frac{1}{2}|\nabla K|^2\psi'') \cdot \nabla u \overline{u}  \\
+&\epsilon \lambda^{1/2} \int |\nabla K|^2\psi' |u|^2-\Im \lambda^{1/2} \int\nabla (|\nabla K|^2\psi') \cdot \nabla u \overline{u} \nonumber \\
=-&\Re \int  f\nabla \psi \cdot \overline{\nabla u}-\frac{1}{2} \Re \int f \Delta \psi \overline{u} \nonumber \\
+&\Re \int \frac{1}{2}|\nabla K|^2\psi''f \overline{u}+\Im \lambda^{1/2} \int |\nabla K|^2 \psi' f \overline{u}. \nonumber
\end{align}

We have the following result:
\begin{align*}
\nabla u \cdot D^2 \psi \cdot \nabla \overline{u}
=&\frac{\psi'}{K}|\nabla K|^2|\nabla_K^{\perp} u|^2 + |\nabla K|^2 \psi''|\nabla _K^r u|^2
+\frac{\psi'}{K}\nabla u \cdot F \cdot \nabla \overline{u}.
\end{align*}
Also
\begin{align*}
\Delta \psi=\psi''|\nabla K|^2+ \psi'\left(\frac{d-1}{K}|\nabla K|^2 +\frac{1}{K}\sum F_{ii}\right).
\end{align*}
We can explicit the following term:
\begin{align*}
\nabla (|\nabla K|^2\psi')=\nabla (|\nabla K|^2)\psi'+  \psi'' \nabla K|\nabla K|^2.
\end{align*}
We rewrite the $\psi''$ terms in the following way:
\begin{align*}
&\int|\nabla K|^2 \psi''|\nabla _K^r u|^2+\frac{1}{2}\int |\nabla K|^4\psi'' \lambda |u|^2\\
&- \frac{1}{2}\int |\nabla K|^2\psi'' (|\nabla^r_K u|^2+|\nabla_K^{\perp} u|^2 )-\Im \lambda^{1/2} \int \psi'' \nabla K|\nabla K|^2\cdot \nabla u \overline{u}\\
&=\frac{1}{2}\int |\nabla K|^2 \psi''(|\nabla_K^r u|^2+\lambda |\nabla K|^2|u|^2)-\Im \lambda^{1/2} \int \psi'' |\nabla K|^3 \nabla_K^r u \overline{u}\\
&-\frac{1}{2}\int |\nabla K|^2\psi'' |\nabla_K^{\perp}u|^2.
\end{align*}

We now subtract \eqref{Lemma2a} from \eqref{sum_lemmata}, replacing $\phi$ by $\frac{\epsilon}{2\lambda^{1/2}}\psi'$:
\begin{align}
&\frac{1}{2}\int |\nabla K|^2 \psi''(|\nabla_K^r u|^2+\lambda|\nabla K|^2 |u|^2)-\Im \lambda^{1/2} \int \psi'' |\nabla K|^3 \nabla_K^r u \overline{u} \nonumber \\
&-\frac{1}{2}\int |\nabla K|^2\psi'' |\nabla_K^{\perp}u|^2+\int\frac{\psi'}{K}\left(\nabla u \cdot F \cdot \nabla\overline{u} \right) \nonumber \\
&+\int \frac{\psi'}{K}|\nabla K|^2|\nabla_K^{\perp} u|^2+ \Re \frac{d-1}{2}\int \nabla \left( \frac{\psi'}{K}|\nabla K|^2\right) \cdot\nabla u \overline{u} \nonumber  \\
&+\epsilon \lambda^{1/2}\int |\nabla K|^2\psi' |u|^2+ \epsilon \Im \int \nabla \psi \cdot \nabla \overline{u} u \nonumber \\
&-\frac{d-1}{2}\int  \frac{\psi'}{K}|\nabla K|^2  Q |u|^2 -\Re \int Q\nabla \psi \cdot \nabla \overline{u} u \nonumber \\
\label{lemma4_middle}
&-\Im \lambda^{1/2} \int\nabla (|\nabla K|^2)\psi'\cdot \nabla u \overline{u} +  \frac{1}{2}\int \nabla \psi \cdot \nabla p|u|^2\\
&+ \Re \frac{1}{2}\int \nabla \left( \frac{\psi'}{K}\sum F_{ii}\right) \cdot\nabla u \overline{u}
-\frac{1}{2}\int \left( \frac{\psi'}{K}\sum F_{ii} \right) Q |u|^2 \nonumber \\
&-\int \frac{\epsilon}{2\lambda^{1/2}}\psi' (\lambda+p+Q) |u|^2 + \int \frac{\epsilon}{2\lambda^{1/2}}\psi' |\nabla u|^2 \nonumber \\
&+\Re\int\frac{\epsilon}{2\lambda^{1/2}} \nabla (\psi') \cdot \nabla u \overline{u}\nonumber \\
=&-\Re \int  f\nabla \psi \cdot \overline{\nabla u}-\frac{1}{2} \Re \int f \left[ \psi'\left(\frac{d-1}{K}|\nabla K|^2 +\frac{1}{K}\sum F_{ii}\right)\right]\overline{u}\nonumber \\
&+\Im \lambda^{1/2} \int |\nabla K|^2 \psi' f \overline{u}-\Re\int \frac{\epsilon}{2\lambda^{1/2}} \psi' f \overline{u}. \nonumber 
\end{align}
As 
\begin{equation*}
|\nabla_K^ru|^2+\lambda|\nabla K|^2 |u|^2 -2\Im \lambda^{1/2} |\nabla K|\nabla_K^r u\overline{u}=|\nabla_K^ru -i\lambda^{1/2}|\nabla K| u|^2,
\end{equation*}
we can write the first line as 
\begin{equation*}
\frac{1}{2}\int |\nabla K|^2 \psi''\left|\nabla_K^r u-i\lambda^{1/2}|\nabla K| u\right|^2.
\end{equation*}
We simplify the terms with $\epsilon$:
\begin{align*}
&\frac{\epsilon}{2\lambda^{1/2}}2\int |\nabla K|^2\psi' \lambda|u|^2+ \frac{\epsilon}{2\lambda^{1/2}} \Im \int 2\lambda^{1/2} \psi' \nabla K \cdot \nabla \overline{u} u \\
&-\int \frac{\epsilon}{2\lambda^{1/2}}\psi' (\lambda|\nabla K|^2+Q) |u|^2 + \int \frac{\epsilon}{2\lambda^{1/2}}\psi' |\nabla u|^2 \\
&+\Re\int\frac{\epsilon}{2\lambda^{1/2}} \nabla (\psi') \cdot \nabla u \overline{u}.
\end{align*}
 Then, since
\begin{align*}
\left| \nabla u-i\lambda^{1/2} \nabla K u\right|^2=|\nabla u|^2 + \lambda |\nabla K|^2 -2\lambda^{1/2}\Im \nabla K \cdot \nabla \overline{u} u,
\end{align*}
this becomes
\begin{align*}
&\frac{\epsilon}{2\lambda^{1/2}}\int \psi'\left[|\nabla K|^2 \lambda|u|^2- \Im 2\lambda^{1/2}  \nabla K \cdot \nabla \overline{u} u+|\nabla u|^2\right] \\
-&\frac{\epsilon}{2\lambda^{1/2}}\int \psi' Q |u|^2 +\Re\int\frac{\epsilon}{2\lambda^{1/2}} \nabla (\psi') \cdot \nabla u \overline{u}\\
=&\frac{\epsilon}{2\lambda^{1/2}}\int \psi' \left| \nabla u-i\lambda^{1/2} \nabla K u\right|^2\\
-&\frac{\epsilon}{2\lambda^{1/2}}\int \psi' Q |u|^2 +\Re\int\frac{\epsilon}{2\lambda^{1/2}} \nabla (\psi') \cdot \nabla u \overline{u}.
\end{align*}
The result thus follows from \eqref{lemma4_middle}.
\end{proof}

\section{Main result}

Armed with Lemma \ref{Keylemma}, we are now ready to show the following proposition, from which our main theorem will follow. 
\begin{proposition}
\label{prop1}
Let $d\geq 3$, $\epsilon >0$ and $f$ such that $N_1(f)<\infty$ and $\||\cdot|^2 f\|_{L^2}< \infty$. Then there exist $\lambda_0 >0$ and a constant $C=C(\lambda_0)$ such that, for any $R\geq 1$ and $\lambda \geq \lambda_0$, the solution $u\in H^1(\mathbb{R}^d)$ to the Helmholtz equation \eqref{Helm} satisfies 
\begin{align*}
&\int_{K\leq R} |\nabla K|^2 K \left|\nabla_K^r u-i\lambda^{1/2}|\nabla K| u+\frac{d-1}{2K}|\nabla K|u\right|^2 + R\int_{K\geq 2R} |\nabla K|^2|\nabla (e^{-i\lambda^{1/2}K}u)|^2 \\
&+\frac{\epsilon}{2\lambda^{1/2}}\int_{K\leq R} K^2 \left| \nabla u-i\lambda^{1/2} \nabla K u\right|^2
+\frac{\epsilon R}{2\lambda^{1/2}}\int_{K \geq 2R} K \left| \nabla u-i\lambda^{1/2} \nabla K u\right|^2\\
\leq &C\left[|||u|||_1^2+|||\nabla u|||_1^2 +\frac{N_1(f)^2}{\lambda} + \int (1+\epsilon |x|) |x|^3 |f|^2 \right]
\end{align*}
\end{proposition}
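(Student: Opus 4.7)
The plan is to apply Lemma~\ref{Keylemma} with a specific $K$-radial multiplier and rearrange the resulting identity. Concretely, take $\psi$ with $\psi'(K) = K\,h(K)$, where $h$ is smooth and non-decreasing, with $h(K)=K$ on $\{K\le R\}$ and $h(K)=R$ on $\{K\ge 2R\}$. Then $\psi''=2K$ on $\{K\le R\}$, $\psi''=R$ on $\{K\ge 2R\}$ (both $\ge 0$), and $\psi'/K-\psi''/2$ vanishes on $\{K\le R\}$ and equals $R/2$ on $\{K\ge 2R\}$. The first two lines of Lemma~\ref{Keylemma} then produce, at leading order, exactly $\int_{K\le R}|\nabla K|^2 K\,|\nabla_K^r u - i\lambda^{1/2}|\nabla K|u|^2$ and, after combining the radial and perpendicular parts, $\tfrac{R}{2}\int_{K\ge 2R}|\nabla K|^2\,|\nabla(e^{-i\lambda^{1/2}K}u)|^2$. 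The last $\epsilon$-block of the lemma delivers the matching $\epsilon$-terms of the proposition directly, since $\psi'=K^2$ on $\{K\le R\}$ and $\psi'=RK$ on $\{K\ge 2R\}$.

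To upgrade from the ``incomplete'' radial square to the proposition's $|\nabla_K^r u - i\lambda^{1/2}|\nabla K|u + \tfrac{d-1}{2K}|\nabla K|u|^2$, I would expand the square and note that only two cross-terms appear: $(d-1)\int_{K\le R}|\nabla K|^3\,\Re(\nabla_K^r u\,\bar u)$ and $\tfrac{(d-1)^2}{4}\int_{K\le R}\tfrac{|\nabla K|^4}{K}|u|^2$. The first is produced at leading order by the $\tfrac{d-1}{2}$-line of Lemma~\ref{Keylemma}, because $\nabla(\psi'/K\cdot|\nabla K|^2) = h'(K)|\nabla K|^2\nabla K + h(K)\nabla p/\lambda$ and $h'=1$ on $\{K\le R\}$. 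The remaining $|u|^2$-coefficient is generated from a second copy of the cross-term via the integration-by-parts identity $\Re\int V\cdot\nabla u\,\bar u = -\tfrac12\int\mathrm{div}(V)\,|u|^2$, combined with the formula $\mathrm{div}(|\nabla K|^2\nabla K) = (d-1)|\nabla K|^4/K + F\text{-error} + \nabla p/\lambda\text{-error}$ obtained by summing Lemma~\ref{Lemma1}. Up to absorbable remainders, the $\tfrac{(d-1)^2}{4}$-piece cancels exactly.

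All remaining terms in Lemma~\ref{Keylemma} are errors of four kinds: (i) those carrying a factor of $\nabla(|\nabla K|^2) = \nabla p/\lambda$, including the $\Im\lambda^{1/2}$-line and $\tfrac12\int\nabla\psi\cdot\nabla p\,|u|^2$; (ii) those involving $F_{ij}$ and $\nabla\sum F_{ii}$; (iii) the $Q$-terms; and (iv) the transition-region contributions where $h'$ or $h''$ is non-zero. Using the Section~2 estimates $|\partial^\beta p|\lesssim|x|^{-2-|\beta|-\delta}$, $|F_{ij}|\lesssim|x|^{-2-\delta}$, $|\nabla\sum F_{ii}|\lesssim|x|^{-3-\delta}$ and the hypothesis $|Q|\lesssim|x|^{-3-\delta}$ from \eqref{pconditions}, Cauchy--Schwarz bounds each such term either by $C(|||u|||_1^2+|||\nabla u|||_1^2)$ or by $C\lambda^{-1}$ times the proposition's LHS; the latter is absorbed once $\lambda\ge\lambda_0$ with $\lambda_0$ large. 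The $f$-terms on the right of Lemma~\ref{Keylemma} are handled by splitting into the dyadic annuli $C(j)$ from the definition of $N_1$ and applying Cauchy--Schwarz, using $|\nabla\psi|\lesssim K^2|\nabla K|$ and $\psi'\lesssim K^2$, which gives bounds of shape $CN_1(f)^2/\lambda + C\int(1+\epsilon|x|)|x|^3|f|^2$ plus pieces absorbable on the left.

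The main obstacle is to make the square-completion work while keeping every error within the Morrey--Campanato bounds. The non-spherical geometry ($K\ne|x|$) generates many $F$-, $\nabla p$-, and $\Delta K$-terms absent in the spherical BVZ argument; controlling them is precisely what forces the decay rates in \eqref{pconditions} and dictates the choice of $\lambda_0$, since the extra $\lambda^{-1}$-factors coming from $\nabla(|\nabla K|^2)=\nabla p/\lambda$ are what finally allow the error terms to be absorbed into the left-hand side.
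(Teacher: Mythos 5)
Your proposal is, in outline, the same proof as the paper's: insert the multiplier $\psi'=K^2$ on $\{K\le R\}$, $\psi'\sim RK$ outside, into Lemma \ref{Keylemma}; recombine the radial and perpendicular parts on the outer region into $\frac{R}{2}\int|\nabla K|^2|\nabla(e^{-i\lambda^{1/2}K}u)|^2$; complete the square on $\{K\le R\}$ by doubling the $\frac{d-1}{2}$-term and integrating one copy by parts, using $\Delta K=\frac{d-1}{K}|\nabla K|^2+\frac{1}{K}\sum F_{ii}$ from Lemma \ref{Lemma1}; and classify everything else ($\nabla p/\lambda$-, $F$-, $Q$-, $\epsilon$- and $f$-terms) as errors controlled by the Section 2 decay estimates, the Agmon--Hörmander norms, and absorption for $\lambda\ge\lambda_0$.

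There are, however, two concrete points where your write-up has a gap. First, the cutoff you specify does not exist: a non-decreasing $h$ with $h(K)=K$ on $\{K\le R\}$ and $h(K)=R$ on $\{K\ge 2R\}$ is forced to equal $R$ on all of $[R,2R]$, i.e. $h=\min(K,R)$, which has a corner at $K=R$ and is not smooth; you must either smooth to a different outer constant (keeping $0\le Kh'\le h$ so that the coefficient $\frac{\psi'}{K}-\frac{\psi''}{2}=\frac{h-Kh'}{2}$ of $|\nabla_K^{\perp}u|^2$ stays nonnegative and is $\gtrsim R$ on $\{K\ge 2R\}$) or accept the Lipschitz kink, which is what the paper does. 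Second, and relatedly, your type-(iv) ``transition'' errors are exactly where the real work is: with the kink multiplier, the integrations by parts in the square completion and in the $\epsilon$-term $\frac{\epsilon}{\lambda^{1/2}}\Re\int K\nabla K\cdot\nabla u\,\overline{u}$ produce surface integrals over the transition level set (the terms in \eqref{J3.5} and \eqref{M3}), and these cannot be bounded by $|||u|||_1^2$ for an arbitrary fixed radius. The paper handles this by a preliminary pigeonhole argument selecting a good radius $R_1\in[R,2R]$ satisfying \eqref{equalnok}--\eqref{equalk}, and for the $\epsilon$-weighted pieces it additionally needs \eqref{Lemma2b} with $\phi=1$, i.e. $\epsilon\int|u|^2\le N_1(f)\,|||u|||_1$, to trade the factor $\epsilon$ for $N_1(f)$; your blanket claim that each error is bounded by $C(|||u|||_1^2+|||\nabla u|||_1^2)$ or absorbed via $\lambda^{-1}$ misses this. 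A genuinely smoothed cutoff (with corrected endpoint values and $|h''|\lesssim 1/R$ supported in $\{R\le K\le 2R\}$) would instead turn these surface terms into bulk integrals over the annulus, which is an acceptable alternative; with either fix, and since $R_1\simeq R$, the stated inequality follows.
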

We recall the definitions of the Agmon-Hörmander norms:
\begin{equation*}
|||u|||_{R_0}:=\sup_{R\geq R_0}\left(\frac{1}{R}\int_{|x|\leq R}|u(x)|^2\right)^{1/2}
\end{equation*}
and 
\begin{align*}
N_{R_0}(u):=\sum_{j>J}\left( 2^{j+1}\int_{C(j)}|u|^2 \right)^{1/2} + \left( R_0\int_{|x| \leq R_0} |u|^2 \right)^{1/2}
\end{align*}
where $J$ is such that $2^{J-1} < R_0 < 2^J$ and $C(j)=\lbrace x \in \mathbb{R}^d : 2^j \leq |x| \leq 2^{j+1}\rbrace$. Note that for $R_0 \geq 0$
\[
\left| \int uv \right| \leq |||u|||_{R_0}N_{R_0}(v).
\]

\begin{proof}
Let $R\geq 1$ and observe that 
\begin{align*}
|||u|||_R^2 &\geq \frac{1}{2R}\int_{K\leq 2R}|u|^2 \geq \frac{1}{2R}\int_{R \leq K \leq 2R}|u|^2=\frac{1}{2R}\int_R^{2R}\frac{1}{r}\int_{K=r}|x||u|^2\\ 
&\geq \frac{\log2}{2}\inf_{R \leq r \leq 2R}\int_{K=r}|u|^2,
\end{align*}
which implies that there exists $R_1$ such that $R\leq R_1 \leq 2R$ and 
\begin{align}
\label{equalnok}
\int_{|K|=R_1}|u|^2 \leq C |||u|||_{R_1}^2, \\
\label{equalk}
\int_{|K|=R_1}K|u|^2 \leq C \int |u|^2.
\end{align} 
Then we have the following: for $R_0>1$,
\begin{align*}
\int_{K\leq R_0}|u|^2
=\int_{K\leq1}|u|^2+\int_{1< K\leq R_0}|u|^2. 
\end{align*}
It is easy to see that 
\begin{align*}
\int_{K\leq1}|u|^2 \leq C |||u|||_1^2
\end{align*}
and 
\begin{align*}
\int_{1\leq K\leq R_0}\frac{|u|^2}{|x|^\alpha} \leq C|||u|||_1^2 \\
\int_{K\geq R_0}\frac{|u|^2}{|x|^\alpha} \leq C|||u|||_1^2
\end{align*}
when $\alpha>1$. 

Let $\psi'(K(x))=K^2$ if $K\leq R_1$ and $\psi'(K(x))=R_1 K$ if $K\geq R_1$. Then (in a distributional sense), $\psi''=2K$ for $K\leq R_1$ and $\psi''=R_1$ for $K\geq R_1$. In the following, will use the notation $\int_{\leq }:=\int_{K\leq R_1 }$ and $\int_{\geq }:=\int_{K\geq R_1 }$. We put this $\psi$ int the equation of Lemma \ref{Keylemma}, and, after some simplification, we get:
\begin{align}
\label{A}&\int_{\leq} |\nabla K|^2 K \left|\nabla_K^r u-i\lambda^{1/2}|\nabla K| u\right|^2
+\frac{R_1}{2}\int_{\geq} |\nabla K|^2\left|\nabla_K^r u-i\lambda^{1/2}|\nabla K| u\right|^2\\
\label{B}&+ \frac{R_1}{2}\int_{\geq}|\nabla K|^2|\nabla_K^{\perp} u|^2 
 \\
\label{C}&+ \Re \frac{d-1}{2}\int_{\leq} \nabla \left( K|\nabla K|^2\right) \cdot\nabla u \overline{u}
+ \Re \frac{d-1}{2}R_1\int_{\geq} \nabla \left( |\nabla K|^2\right) \cdot\nabla u \overline{u}\\
\label{D}&-\frac{d-1}{2}\int_{\leq}  K|\nabla K|^2  Q |u|^2 
-\frac{d-1}{2} R_1 \int_{\geq} |\nabla K|^2  Q |u|^2 \\
\label{D.5}&+  \frac{1}{2}\int_{\leq} K^2 \nabla K \cdot \nabla p|u|^2
+  \frac{1}{2}\int_{\geq} R_1 K\nabla K \cdot \nabla p|u|^2\\
\label{E}&-\Re \int_{\leq} QK^2 \nabla K \cdot \nabla \overline{u} u 
-\Re \int_{\geq} Q R_1 K \nabla K \cdot \nabla \overline{u} u \\
\label{F}&-\Im \lambda^{1/2} \int_{\leq}\nabla (|\nabla K|^2)K^2\cdot \nabla u \overline{u}
-\Im \lambda^{1/2} \int_{\geq}\nabla (|\nabla K|^2)R_1K\cdot \nabla u \overline{u}\\
\label{G}&+\int_{\leq} K\left(\nabla u \cdot F \cdot \nabla\overline{u}\right) +\int_{\geq}R_1\left(\nabla u \cdot F \cdot \nabla \overline{u}\right)\\
\label{I}&+ \Re \frac{1}{2}\int_{\leq} \nabla \left( K\sum F_{ii}\right) \cdot\nabla u \overline{u}
+ \Re \frac{1}{2}\int_{\geq} \nabla \left( R_1\sum F_{ii}\right) \cdot\nabla u \overline{u} \\
\label{J}&-\frac{1}{2}\int_{\leq} \left( K\sum F_{ii} \right) Q |u|^2
-\frac{1}{2}\int_{\geq} \left(R_1\sum F_{ii} \right) Q |u|^2\\
\label{K}&+\frac{\epsilon}{2\lambda^{1/2}}\int_{\leq} K^2 \left| \nabla u-i\lambda^{1/2} \nabla K u\right|^2
+\frac{\epsilon}{2\lambda^{1/2}}\int_{\geq} R_1K \left| \nabla u-i\lambda^{1/2} \nabla K u\right|^2\\
\label{L}&-\frac{\epsilon}{2\lambda^{1/2}}\int_{\leq} K^2 Q |u|^2
-\frac{\epsilon}{2\lambda^{1/2}}\int_{\geq} R_1K Q |u|^2 \\
\label{M}& +\frac{\epsilon}{2\lambda^{1/2}} \Re  \int_{\leq} 2K \nabla K \cdot \nabla u \overline{u}
+\frac{\epsilon}{2\lambda^{1/2}} \Re  \int_{\geq} R_1 \nabla K \cdot \nabla u \overline{u}\\
\label{N}=&-\Re \int_{\leq}  f K^2 \nabla K \cdot \overline{\nabla u}
-\Re \int_{\geq}  f R_1 K\nabla K \cdot \overline{\nabla u} \\
\label{O}&-\frac{1}{2} \Re \int_{\leq} f \left[ K\left((d-1)|\nabla K|^2 +\sum F_{ii}\right)\right]\overline{u} \\
\label{P}&-\frac{1}{2} \Re \int_{\geq} f \left[ R_1 \left((d-1)|\nabla K|^2 +\sum F_{ii}\right)\right]\overline{u}\\
\label{Q}&+\Im \lambda^{1/2} \int_{\leq} |\nabla K|^2 K^2 f \overline{u}
+\Im \lambda^{1/2} \int_{\geq} |\nabla K|^2 R_1K f \overline{u}\\
\label{R}&-\frac{\epsilon}{2\lambda^{1/2}}\Re\int_{\leq}  K^2 f \overline{u} 
-\frac{\epsilon}{2\lambda^{1/2}}\Re\int_{\geq} R_1 K f \overline{u}
\end{align}
The $\int_{\geq}$ terms of \eqref{A} and \eqref{B} can be added in the following way: since
\begin{align*}
&|\nabla_K^r u-i\lambda^{1/2}|\nabla K|u|^2+|\nabla_K^{\perp}u|^2\\
=&|\nabla_K^r u|^2+|\nabla_K^{\perp}u|^2+\lambda|\nabla K|^2 |u|^2-2\Im \lambda^{1/2}|\nabla K|\nabla_K^ru \overline{u}\\
=&|\nabla u|^2+\lambda |\nabla K|^2|u|^2-2\Im \lambda^{1/2} \nabla K \cdot \nabla u \overline{u}=|\nabla u-i\lambda^{1/2}\nabla K u|^2,
\end{align*} we have
\begin{align*}
\frac{R_1}{2}\int_{\geq} |\nabla K|^2 |\nabla_K^r u-i\lambda^{1/	2}u|^2 + \frac{R_1}{2}\int_{\geq} |\nabla K|^2 |\nabla_K^{\perp}u|^2\\
=\frac{R_1}{2}\int_{\geq} |\nabla K|^2 |\nabla u-i\lambda^{1/2}\nabla K u|^2.
\end{align*}

We calculate for \eqref{C}
\begin{align*}
\nabla \left( K|\nabla K|^2\right) =\nabla K |\nabla K|^2 + K \nabla (|\nabla K|^2)=
\nabla K |\nabla K|^2 + \frac{K}{\lambda} \nabla p.
\end{align*}

Then
\begin{align*}
\Re \frac{d-1}{2}\int_{\leq} (\nabla K |\nabla K|^2)\cdot\nabla u \overline{u}=\Re\frac{1}{2} \frac{d-1}{2}\int_{\leq} (\nabla K |\nabla K|^2)\cdot\nabla |u|^2 \\
=2\Re\frac{d-1}{2}\int_{\leq} (\nabla K |\nabla K|^2)\cdot\nabla u\overline{u}-\frac{1}{2} \frac{d-1}{2}\int_{\leq} (\nabla K |\nabla K|^2)\cdot\nabla |u|^2.
\end{align*}
We integrate the second term by parts:
\begin{align*}
-\frac{1}{2} \frac{d-1}{2}\int_{\leq} (\nabla K |\nabla K|^2)\cdot\nabla |u|^2\\
=-\frac{1}{2} \frac{d-1}{2}\int_{=} (\nabla K |\nabla K|^2)|u|^2+\frac{1}{2} \frac{d-1}{2}\int_{\leq} (\Delta K |\nabla K|^2+\nabla K \nabla |\nabla K|^2)|u|^2
\end{align*}
Since Lemma \ref{Lemma1} gives 
\begin{equation*}
\Delta K=\frac{d-1}{K}|\nabla K|^2 + \frac{1}{K}\sum F_{ii},
\end{equation*}
we have for \eqref{C} 
\begin{align*}
&\Re \frac{d-1}{2}\int_{\leq} (\nabla K |\nabla K|^2)\cdot\nabla u \overline{u}\\
=&2\Re\frac{d-1}{2}\int_{\leq} (\nabla K |\nabla K|^2)\cdot\nabla u\overline{u} -\frac{1}{2} \frac{n-1}{2}\int_{=} (\nabla K |\nabla K|^2)|u|^2\\
&+\frac{1}{2} \frac{d-1}{2}\int_{\leq} (\frac{d-1}{K}|\nabla K|^2 + \frac{1}{K}\sum F_{ii} |\nabla K|^2+\nabla K \nabla |\nabla K|^2)|u|^2\\
=&2\Re\frac{d-1}{2}\int_{\leq}|\nabla K|^2K\frac{1}{K} (\nabla K )\cdot\nabla u\overline{u} -\frac{1}{2} \frac{d-1}{2}\int_{=} (\nabla K |\nabla K|^2)|u|^2\\
&+\frac{d-1}{2} \frac{d-1}{2}\int_{\leq} K\frac{1}{K^2}|\nabla K|^4|u|^2+\frac{1}{2} \frac{d-1}{2}\int_{\leq} (\frac{1}{K}\sum F_{ii} |\nabla K|^2+\nabla K \nabla |\nabla K|^2)|u|^2.
\end{align*}
The first and third term of the last expression can be combined with the first terms of \eqref{A} to form 
\begin{align*}
\int_{\leq} |\nabla K|^2 K \left|\nabla_K^r u-i\lambda^{1/2}|\nabla K| u+\frac{d-1}{2K}|\nabla K|u \right|^2.
\end{align*}

Then we can rewrite the big equation as
\begin{align}
\label{A3}&\int_{\leq} |\nabla K|^2 K \left|\nabla_K^r u-i\lambda^{1/2}|\nabla K| u+\frac{d-1}{2K}|\nabla K|u\right|^2\\
\label{A3.5}&+\frac{R_1}{2}\int_{\geq} |\nabla K|^2|\nabla u-i\lambda^{1/2}\nabla K u|^2 \\
\label{K3}&+\frac{\epsilon}{2\lambda^{1/2}}\int_{\leq} K^2 \left| \nabla u-i\lambda^{1/2} \nabla K u\right|^2
+\frac{\epsilon}{2\lambda^{1/2}}\int_{\geq} R_1K \left| \nabla u-i\lambda^{1/2} \nabla K u\right|^2\\
\label{B3}=&
-\frac{1}{2} \frac{d-1}{2\lambda}\int_{\leq} (\nabla K\cdot \nabla p)|u|^2\\
\label{C3}&- \Re \frac{d-1}{2\lambda}\int_{\leq} ( K \nabla p) \cdot\nabla u \overline{u}
-\Re \frac{d-1}{2}R_1\int_{\geq} \nabla \left(\frac{p}{\lambda}\right) \cdot\nabla u \overline{u}\\
\label{D3.5}&-  \frac{1}{2}\int_{\leq} K^2 \nabla K \cdot \nabla p|u|^2
-  \frac{1}{2}\int_{\geq} R_1 K\nabla K \cdot \nabla p|u|^2 \\
\label{D3}&+\frac{d-1}{2}\int_{\leq}  K|\nabla K|^2  Q |u|^2 
+\frac{d-1}{2} R_1 \int_{\geq} |\nabla K|^2  Q |u|^2 \\
\label{E3}&+\Re \int_{\leq} QK^2 \nabla K \cdot \nabla \overline{u} u 
+\Re \int_{\geq} Q R_1 K \nabla K \cdot \nabla \overline{u} u \\
\label{F3}&+\Im \lambda^{1/2} \int_{\leq}\nabla (|\nabla K|^2)K^2\cdot \nabla u \overline{u}
+\Im \lambda^{1/2} \int_{\geq}\nabla (|\nabla K|^2)R_1K\cdot \nabla u \overline{u}\\
\label{G3}&-\int_{\leq} K\left(\nabla u \cdot F \cdot \nabla\overline{u}\right)
-\int_{\geq}R_1\left(\nabla u \cdot F \cdot \nabla \overline{u} \right)\\
\label{I3}&- \Re \frac{1}{2}\int_{\leq} \nabla \left( K\sum F_{ii}\right) \cdot\nabla u \overline{u}
- \Re \frac{1}{2}\int_{\geq} \nabla \left( R_1\sum F_{ii}\right) \cdot\nabla u \overline{u} \\
\label{J3}&+\frac{1}{2}\int_{\leq} \left( K\sum F_{ii} \right) Q |u|^2
+\frac{1}{2}\int_{\geq} \left(R_1\sum F_{ii} \right) Q |u|^2\\
\label{J3.5}&-\frac{d-1}{4}\int_{\leq} (\frac{1}{K}\sum F_{ii} |\nabla K|^2)|u|^2+ \frac{d-1}{4}\int_{=} (\nabla K |\nabla K|^2)|u|^2\\
\label{O3}&-\frac{1}{2} \Re \int_{\leq} f \left[ K\sum F_{ii}\right]\overline{u} 
-\frac{R_1}{2} \Re \int_{\geq} f \left[ \sum F_{ii}\right]\overline{u}\\
\label{M3}& -\frac{\epsilon}{2\lambda^{1/2}} \Re  \int_{\leq} 2K \nabla K \cdot \nabla u \overline{u}
-\frac{\epsilon}{2\lambda^{1/2}} \Re  \int_{\geq} R_1 \nabla K \cdot \nabla u \overline{u}\\
\label{L3}&+\frac{\epsilon}{2\lambda^{1/2}}\int_{\leq} K^2 Q |u|^2
+\frac{\epsilon}{2\lambda^{1/2}}\int_{\geq} R_1K Q |u|^2 \\
\label{N3}&-\Re \int_{\leq}  f K^2( |\nabla K| \nabla_K^r \overline{ u}+i\lambda^{1/2}|\nabla K|^2\overline{u}+\frac{d-1}{2K}|\nabla K|^2\overline{u})\\
&\label{N3.5}-\Re \int_{\geq}  f R_1 K(|\nabla K|\nabla_K^r \overline{u}+ i\lambda^{1/2}|\nabla K|^2\overline{u}+\frac{d-1}{2K}|\nabla K|^2\overline{u})\\
\label{R3}&-\frac{\epsilon}{2\lambda^{1/2}}\Re\int_{\leq}  K^2 f \overline{u} 
-\frac{\epsilon}{2\lambda^{1/2}}\Re\int_{\geq} R_1 K f \overline{u}
\end{align}.

We are now ready to approximate the RHS of this big equation. We first notice that, since $K \simeq |x|$ and $|\nabla K|^2 \leq C$, the terms \eqref{D3}, \eqref{E3} and \eqref{L3}-\eqref{R3} can be estimated as in the proof of proposition 4.4 of \cite{barcelo_forward_2012}. 

We start with the $\epsilon$ term \eqref{M3}:
\begin{align*}
|\frac{\epsilon}{2\lambda^{1/2}} \Re  \int_{\leq} 2K \nabla K \cdot \nabla u \overline{u}|=|\frac{\epsilon}{2\lambda^{1/2}} \int_{=} K \nabla K|u|^2-\frac{\epsilon}{2\lambda^{1/2}} \Re  \int_{\leq}\nabla (K \nabla K) |u|^2|.
\end{align*} 
The $\int_=$ part can be estimated using \eqref{equalk}. Since
\begin{align*}
|\nabla (K\nabla K)|=||\nabla K|^2+K\Delta K |=| d+d\frac{p}{\lambda}+\sum F_{ii}|\leq C
\end{align*}
we have
\begin{align*}
|\frac{\epsilon}{2\lambda^{1/2}} \Re  \int_{\leq}\nabla( K \nabla K) |u|^2|\leq \frac{C\epsilon}{\lambda^{1/2}} \int_{\leq} |u|^2\leq \frac{C}{\lambda^{1/2}}N_1(f)|||u|||_1.
\end{align*}
The $\int_{\geq}$ term of \eqref{M3} is treated the same way. \\
For \eqref{B3}, 
\begin{align*}
|\int_{\leq} (\nabla K\cdot \nabla p)|u|^2 |  \leq C \int_{\leq 1} |u|^2 + C \int_{\geq 1} \frac{|u|^2}{|x|^{3+\alpha}} \leq  C|||u|||_1^2.
\end{align*}
Then, for \eqref{C3},
\begin{align*}
\left|\int ( K \nabla p) \cdot\nabla u \overline{u}\right| \leq C \int_{\leq 1}|\nabla u||u| 
+C\int_{\geq 1}\frac{|\nabla u| |u|}{|x|^{2+\alpha}} \leq C|||\nabla u|||_1^2 + C|||u|||_1^2.
\end{align*}
Also, for \eqref{D3.5},
\begin{align*}
|\int K^2 \nabla K \cdot \nabla p|u|^2|\leq C\int_{\leq 1}|K||u|^2 + C\int_{\geq 1}\frac{|u|^2}{|x|^{1+\alpha}}\leq C |||u|||_1^2.
\end{align*}
We have for \eqref{F3},
\begin{align*}
|\int \nabla (|\nabla K|^2)K^2\cdot \nabla u \overline{u}|\leq C \int|\nabla p| K^2 |\nabla u ||u| \\
\leq C \int_{\leq 1}|\nabla u ||u| + C\int_{\geq 1} \frac{|\nabla u||u|}{|x|^{1+\alpha}}\leq C |||\nabla u|||_1^2 +C |||u|||_1^2.
\end{align*}
We now work on the terms containing $F_{ij}$ using that $F_{ij}$ is bounded and, for $|x|\geq 1$, $F_{ij}\leq \frac{C}{|x|^{2+\alpha}}$.
For \eqref{G3},
\begin{align*}
\left|\int K\left(\nabla u \cdot F \cdot \nabla\overline{u}\right)\right|\leq C\int_{\leq 1}|\nabla u|^2 + C\int_{\geq 1}\frac{|\nabla u|^2}{|x|^{1+\alpha}} \leq C|||\nabla u|||_1^2.
\end{align*}
Then, for \eqref{J3},
\begin{align*}
\left|\int \left( K\sum F_{ii} \right) Q |u|^2 \right|\leq  C\int_{\leq} |K||Q||u|^2,
\end{align*} and we conclude as for \eqref{D3}. \\
For \eqref{J3.5} we get,
\begin{align*}
\left| \int_{\leq} (\frac{1}{K}\sum F_{ii} |\nabla K|^2)|u|^2\right| \leq
\left | \int_{\leq 1} (\frac{1}{K}\sum F_{ii} |\nabla K|^2)|u|^2+\int_{\geq 	} (\frac{1}{K}\sum F_{ii} |\nabla K|^2)|u|^2 \right| \\
\leq C \int_{\leq 1} \frac{K}{K^2}|u|^2+ C \int_{\geq 1	} \frac{|u|^2}{|K|^{3+\alpha}}\leq C|||\nabla u|||_1^2+ C|||u|||_1^2
\end{align*}
Since $\nabla \sum F_{ii}\leq \frac{C}{|x|^{3+\alpha}}$, \eqref{I3} gives
\begin{align*}
|\Re \int_{\leq} \nabla \left( K\sum F_{ii}\right) \cdot\nabla u \overline{u}|\leq \int_{\leq}|\nabla K| |\sum F_{ii}||\nabla u||u| + \int_{\leq}K |\nabla (\sum F_{ii})| |\nabla u||u| \\ 
\leq C|||\nabla u|||_1^2 + C |||u|||_1^2.
\end{align*} \\
For \eqref{O3}, we notice that, since $\lambda \geq \lambda_0 >0$,
\begin{align*}
\left| \Re \int_{\leq} f \left( K\sum F_{ii}\right) \overline{u} \right|\leq C\int_{\leq 1} |f||K||u| + \int_{\geq 1}|f||K|^{-(1+\alpha)}|u| \\ \leq \frac{C}{\lambda^{1/2}}\int_{\leq 1} |f||u|+ \int_{\geq 1}|f||K|^{3/2}|\frac{|u|}{|K|^{1+\alpha}} \\
\leq C\frac{(N_1(f))^2}{\lambda}+ C|||u|||_1^2+C\int|f|^2|K|^{3} + C|||u|||_1^2
\end{align*}
For the surface integral of \eqref{J3.5}, we have
\begin{align*}
|\int_{=} (\nabla K |\nabla K|^2)|u|^2|\leq C\int_{=} |u|^2 \leq C |||u|||_1^2.
\end{align*}

After all of this, since $R$ and $R_1$ are comparable, we have shown the proposition.

\end{proof}

As proven in \cite{perthame_morreycampanato_1999}, the estimate
\begin{equation*}
\lambda |||u|||_1^2 + |||\nabla u|||_1^2 \leq C(1+\epsilon)(N_1(f))^2
\end{equation*}
holds under our assumptions, and we have the corollary
\begin{corollary}
Under the same assumptions as Proposition \ref{prop1}, we have
\begin{align*}
&\int_{K\leq R} |\nabla K|^2 K \left|\nabla_K^r u-i\lambda^{1/2}|\nabla K| u+\frac{d-1}{2K}|\nabla K|u\right|^2 + R\int_{K\geq 2R} |\nabla K|^2|\nabla (e^{-i\lambda^{1/2}K}u)|^2 \\
&+\frac{\epsilon}{2\lambda^{1/2}}\int_{K\leq R} K^2 \left| \nabla u-i\lambda^{1/2} \nabla K u\right|^2
+\frac{\epsilon R}{2\lambda^{1/2}}\int_{K \geq 2R} K \left| \nabla u-i\lambda^{1/2} \nabla K u\right|^2\\
\leq &C\left[ (1+\epsilon)(N_1(f))^2 + \int (1+\epsilon |x|) |x|^3 |f|^2 \right]
\end{align*}
\end{corollary}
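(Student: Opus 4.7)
The corollary is a direct consequence of Proposition \ref{prop1} combined with the cited a priori bound from \cite{perthame_morreycampanato_1999}, so the plan is essentially bookkeeping rather than a new argument. I would begin from the inequality stated in Proposition \ref{prop1}, whose right-hand side is
\[
C\left[|||u|||_1^2+|||\nabla u|||_1^2 +\tfrac{1}{\lambda}N_1(f)^2 + \int (1+\epsilon |x|) |x|^3 |f|^2 \right],
\]
and observe that the only quantities on the right that still depend on the unknown $u$ are the two Agmon-Hörmander norms $|||u|||_1^2$ and $|||\nabla u|||_1^2$. These are exactly what the Perthame-Vega estimate
\[
\lambda |||u|||_1^2 + |||\nabla u|||_1^2 \leq C(1+\epsilon)(N_1(f))^2
\]
controls, so the whole corollary reduces to substituting this bound into Proposition \ref{prop1}.

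More precisely, I would use the Perthame-Vega estimate in two separate ways. Dropping the $|||\nabla u|||_1^2$ term gives $|||u|||_1^2 \leq C(1+\epsilon)(N_1(f))^2/\lambda$, and since $\lambda \geq \lambda_0 > 0$ the factor $1/\lambda$ can be absorbed into the constant, yielding $|||u|||_1^2 \leq C(1+\epsilon)(N_1(f))^2$ with a constant depending only on $\lambda_0$. Dropping instead the $\lambda|||u|||_1^2$ term gives $|||\nabla u|||_1^2 \leq C(1+\epsilon)(N_1(f))^2$ directly. The leftover explicit term $N_1(f)^2/\lambda$ in Proposition \ref{prop1} is likewise bounded by $N_1(f)^2/\lambda_0 \leq C(N_1(f))^2 \leq C(1+\epsilon)(N_1(f))^2$. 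Summing these three controls produces exactly the $C(1+\epsilon)(N_1(f))^2$ term in the corollary, while the $\int(1+\epsilon|x|)|x|^3|f|^2$ term is carried over unchanged.

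There is no real obstacle here; the only point worth verifying is that the estimate of \cite{perthame_morreycampanato_1999} is applicable in our present setting, i.e.\ with potential $V=p+Q$ satisfying \eqref{pconditions}. Since the hypotheses in that reference cover precisely the long-range plus short-range decay considered here (and the author explicitly asserts that the bound ``holds under our assumptions''), this is taken as given. Thus the proof of the corollary is just the substitution described above, with the dependence on $\lambda_0$ absorbed into the universal constant $C$.
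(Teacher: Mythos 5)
Your proposal is correct and follows exactly the paper's route: Proposition \ref{prop1} plus the Perthame--Vega bound $\lambda|||u|||_1^2+|||\nabla u|||_1^2\leq C(1+\epsilon)(N_1(f))^2$, with the factors $1/\lambda$ absorbed into the constant via $\lambda\geq\lambda_0$. The paper states this substitution without spelling out the bookkeeping, which your write-up simply makes explicit.
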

Using the same arguments as Zubeldia \cite{zubeldia_limiting_2013}, if $u$ is given by the limited absorption  principle \eqref{res_sol}, then $u$ is a solution in $H^1_{loc}(\mathbb{R}^d)$ of 
\[
\Delta u + (Q+p)u + \lambda u = f,
\]
which satisfies
\begin{align*}
&\int_{K\leq R} |\nabla K|^2 K \left|\nabla_K^r u-i\lambda^{1/2}|\nabla K| u+\frac{d-1}{2K}|\nabla K|u\right|^2 + R\int_{K\geq 2R} |\nabla K|^2|\nabla (e^{-i\lambda^{1/2}K}u)|^2 \\
\leq &C\left[ (N_1(f))^2 + \int  |x|^3 |f|^2 \right],
\end{align*}
where $C$ is independent of $R$. By taking the supremum over all $R$, we have finally proven our main theorem.

\section{Outlook}
In their paper, Barcelo, Vega and Zubeldia consider the \textit{electromagnetic} Helmholtz equation 
\begin{equation}
\label{electromag}
\nabla_A^2 u + Vu + \lambda u =f.
\end{equation}
with $\nabla_A = \nabla + iA$, and $A$ a magnetic vector potential. While we have chosen to set $A=0$, we expect that Theorem \ref{Theorem_to_prove} remains true if we replace $\nabla$ with $\nabla_A$, when $A$ obeys the conditions for Theorem 1.7 of \cite{barcelo_forward_2012}. Whether the non-spherical method in this paper can be used to improve these conditions on $A$ is less clear, but it would certainly be an interesting topic of further research. 

While our method allows for the consideration of potentials with less decay towards infinity, it does not improve the result of \cite{barcelo_forward_2012} for singularities around the origin. Adapting the method of this paper could be a way to prove that Theorem \ref{Theorem_to_prove} can be modified to allow for potentials behaving like $|x|^{-\gamma}$ close to $0$, for $\gamma$ between two and three. We propose this as an interesting question to be investigated elsewhere.

\bibliographystyle{chapters/CUP}  

\end{document}